\numberwithin{equation}{section}
\newtheorem{theorem}{Theorem}[section]
\newtheorem{lemma}[theorem]{Lemma}
\newtheorem{corollary}[theorem]{Corollary}
\newtheorem{definition}[theorem]{Definition}
\newcommand{\cout}[1]{}
\begin{document}
\title[Killing tensor fields on the 2-torus]{Killing tensor fields on the 2-torus}

\author{Vladimir Sharafutdinov}
\address{Sobolev Institute of Mathematics and Novosibirsk State University}
\email{sharaf@math.nsc.ru}

\date{July 2014, Koltsovo}

\begin{abstract}
A symmetric tensor field on a Riemannian manifold is called Killing field if the symmetric part of its covariant derivative is equal to zero. There is a one to one correspondence between Killing tensor fields and first integrals of the geodesic flow which depend polynomially on the velocity. Therefore Killing tensor fields closely relate to the problem of integrability of geodesic flows. In particular, the following question is still open: does there exist a Riemannian metric on the 2-torus which admits an irreducible Killing tensor field of rank $\geq 3$? We obtain two necessary conditions on a Riemannian metric on the 2-torus for the existence of Killing tensor fields. The first condition is valid for Killing tensor fields of arbitrary rank and relates to closed geodesics. The second condition is obtained for rank 3 Killing tensor fields and relates to isolines of the Gaussian curvature.
\end{abstract}

\maketitle

\section{Introduction}

Although the main part of the paper concerns Killing tensor fields on the 2-torus, the problem can be posed for any Riemannian manifold. Here, we present main definitions and introduce some notations following \cite[\S3.3]{mb} as close as possible.

Given a Riemannian manifold $(M,g)$, let $\tau'_M$ be the cotangent bundle and let $S^m\tau'_M$ be the bundle of symmetric rank $m$ covariant tensors. The last notation will be mostly abbreviated to  $S^m$ on assuming the manifold to be known from the context.
The space $C^\infty(S^m)$ of smooth sections of the bundle is the $C^\infty(M)$-module of smooth covariant symmetric tensor fields of rank $m$ on $M$. The sum
$
S^*=\bigoplus_{m=0}^\infty S^m
$
is the bundle of graded commutative algebras with respect to the product $fh=\sigma(f\otimes h)$, where $\sigma$ is the symmetrization.
If $(U;x^1,\dots,x^n)$ is a local coordinate system on $M$, then the space $C^\infty(S^*;U)$ of smooth sections over $U$ is the free commutative $C^\infty(U)$-algebra with generators $dx^i\in C^\infty(\tau'_M;U)\ (1\leq i\leq n)$, i.e., every field $f\in C^\infty(S^m;U)$ can be uniquely represented in the form
$f=f_{i_1\dots i_m}\,dx^{i_1}\dots dx^{i_m}$. The coefficients of the representation $f_{i_1\dots i_m}\in C^\infty(U)$, that are called coordinates (or components) of the field $f$ (with respect to the given coordinate system), are symmetric in the indices $(i_1,\dots,i_m)$.

The differential operator
$
d=\sigma\nabla:C^\infty(S^m)\rightarrow C^\infty(S^{m+1}),
$
where $\nabla$ is the covariant derivative with respect to the Levi-Civita connection, is called the {\it inner differentiation}. We say
$f\in C^\infty(S^m)$ is a {\it Killing tensor field} if
\begin{equation} \label{df=0}
df=0.
\end{equation}
The inner derivative and product are related by the Leibnitz rule $d(fh)=(df)h+f(dh),$ which implies the statement: if $f$ and $h$ are Killing tensor fields, then $fh$ is also a Killing field. A Killing tensor field $f\in C^\infty(S^m)\ (m\neq2)$ is said to be {\it irreducible} if it cannot be represented as a finite sum $f=\sum_i u_iv_i$, where all $u_i$ and $v_i$ are Killing tensor fields of positive ranks. In the case of $m=2$, we additionally require $f$ to be different of $cg\ (c=\mbox{const})$. The requirement eliminates the metric tensor from the list of irreducible Killing fields.

Being written in coordinates for a rank $m$ tensor field, (\ref{df=0}) is a system of ${n+m}\choose {m+1}$ linear first order differential equations in ${n+m-1}\choose {m}$ coordinates of $f$, where $n=\mbox{dim}M$. Since the system is overdetermined, not every Riemannian manifold admits nonzero Killing tensor fields. In our opinion, the two-dimensional case is of the most interest since the degree of the overdetermination is equal to 1 in this case. In the two-dimensional case, roughly speaking, we obtain one equation on the metric $g$ after eliminating all coordinates of $f$ from system (\ref{df=0}), although the possibility of such elimination is rather problematic.


Let $\pi:TM\rightarrow M$ be the tangent bundle. We denote points of the manifold $TM$ by pairs $(x,\xi)$, where $x\in M$ and $\xi\in T_xM$. If $(U;x^1,\dots,x^n)$ is a local coordinate system on $M$ with the domain $U\subset M$, then the corresponding local coordinate system $(\pi^{-1}(U);x^1,\dots,x^n,\xi^1,\dots,\xi^n)$ is defined on $TM$, where $\xi=\xi^i\frac{\partial}{\partial x^i}$. Only such coordinates on $TM$ are used in what follows. Given a tensor field $f\in C^\infty(S^m)$, let $F\in C^\infty(TM)$ be defined in coordinates by $F(x,\xi)=f_{i_1\dots i_m}(x)\,\xi^{i_1}\dots\xi^{i_m}$. Observe $F(x,\xi)$ is a homogeneous polynomial of degree $m$ in $\xi$. The correspondence $f\mapsto F$ identifies the algebra $C^\infty(S^*)$ with the subalgebra of $C^\infty(TM)$ which consists of functions polynomially depending on $\xi$.

Let $H$ be the vector field on $TM$ generating the geodesic flow.
Let $\Omega M\subset TM$ be the manifold of unit tangent vectors. Since the geodesic flow preserves the norm of a vector, $H$ can be considered as a first order differential operator on $\Omega M$, i.e., $H:C^\infty(\Omega M)\rightarrow C^\infty(\Omega M)$. The operators $d$ and $H$ are related as follows: if $f\in C^\infty(S^m)$ and $F=f_{i_1\dots i_m}\,\xi^{i_1}\dots\xi^{i_m}\in C^\infty(TM)$ is the corresponding polynomial, then
$$HF=(df)_{i_1\dots i_{m+1}}\,\xi^{i_{1}}\dots\xi^{i_{m+1}}.$$ In particular, $f$ is a Killing tensor field if and only if $HF=0$, i.e., if $F$ is a first integral for the geodesic flow. Thus, the problem of finding Killing tensor fields is equivalent to the problem of finding first integrals of the geodesic flow which polynomially depend on $\xi$.

Because of the relation to integrable dynamical systems, the problem has been considered by many mathematicians, starting with classical works of G.~Darboux \cite{Dr} and J.~Birkhoff \cite{Br}, and is still investigated now. We do not present corresponding references here because of the volume limitation and refer the reader to \cite{Ko} and \cite{BF} where a large reference list is presented.
In particular, metrics on surfaces are classified which admit irreducible Killing tensor fields of rank 1 and 2. But as far as we know, the most of questions are open on metrics admitting irreducible Killing tensor fields of rank $\geq3$.

The problem is traditionally posed as follows: one has to determine whether there exist Riemannian metrics on a given manifold which admit irreducible Killing tensor fields of a given rank and, if possible, to find all such metrics. In the current paper, we discuss the more modest problem: given a Riemannian manifold $(M,g)$, one has to determine whether it admits irreducible Killing tensor fields of rank $m$ and, if possible, to describe all such fields. We are going to demonstrate right now the problem in our setting can be in principle efficiently solved if we can solve elliptic equations on the given manifold.

Unless otherwise indicated, the term ``Riemannian manifold'' means a smooth (i.e., of the class $C^\infty$) compact manifold with no boundary endowed with a smooth Riemannian metric. For a Riemannian manifold $(M,g)$, let
$
-\delta:C^\infty(S^{m+1})\rightarrow C^\infty(S^{m})
$
be the operator adjoint to $d$ with respect to the natural $L^2$ dot product defined on $C^\infty(S^m)$. The operator $\delta$ is called the {\it divergence} and is expressed by the formula
$(\delta f)_{i_1\dots i_m}=g^{pq}{\nabla}_{\!p}f_{qi_1\dots i_m}$ in local coordinates.
Since $\delta d$ is an elliptic operator \cite[Theorem 3.3.2]{mb}, it has a finite-dimensional kernel. The kernel coincides with the space of rank $m$ Killing tensor fields as is seen from the equality
$(\delta df,f)_{L^2}=-(df,df)_{L^2}$. If we found the kernel of $\delta d$ for the given $m$ and for all less ranks, then we would be able to describe efficiently all irreducible Killing tensor fields of rank $m$ on $(M,g)$.

The dimension ${n+m-1}\choose {m}$ of the bundle $S^m$ grows fast with $m$ and $n=\mbox{dim}\,M$. In Section 2, we will reduce the problem to a similar question for the elliptic operator $\delta pd$ ($p$ will be defined later) which acts on a bundle of the less dimension $\frac{n+2m-2}{n+m-2}{{n+m-2}\choose {m}}$. In particular, the latter bundle is two-dimensional in the case of $n=2$. The reduction will be done by expanding the polynomial $F\in C^\infty(\Omega M)$ into the Fourier series in spherical harmonics with respect to the variable $\xi$ and replacing the equation $HF=0$ by a chain of equations relating spherical harmonics of different degrees. The main result of Section 2 is as follows: a Killing rank $m$ tensor field $f$ is determined by its higher harmonic $pf$ uniquely up to a Killing tensor field of rank $m-2$. The spherical harmonics method is widely used for the numerical solution of the kinetic equation and of some its relatives \cite[Chapter 8]{KZ}, but sometimes the method successfully works in theoretical questions too. For example, some version of the method was used in \cite{GK} for proving the spectral rigidity of a negatively curved surface.

In Section 3, we consider Killing tensor fields on the 2D torus. Due to the existence of global isothermal coordinates, the kernel of the operator $\delta pd$ can be explicitly described, it turns out to be a two-dimensional space. Theorem \ref{Th3.2} gives some necessary and sufficient condition (of a nonlocal nature) on the metric for the existence of a rank $m$ irreducible Killing tensor field. Unfortunately, the check of the condition is not much easier than the initial problem. So, the main question remains open: does there exist a Riemannian metric on the 2-torus which admits irreducible Killing tensor fields of rank $m\geq3$? Nevertheless, the necessary condition of Corollary \ref{C3.2} allows us to give the negative answer to the question for many specific metrics. All we need is to find two closed geodesics such that certain functions $\varphi_m$ and $\psi_m$ produce linearly independent integrals over that geodesics, the functions $\varphi_m$ and $\psi_m$ are explicitly expressed through the metric and direction of a geodesic. 

System (\ref{df=0}) has the following interesting property. Each equation of the system is a linear first order differential equation in coordinates of $f$, but the system cannot be solved with respect to all first order derivatives of the coordinates. Nevertheless, as shown in \cite[Theorem 2.2.2]{mb}, after $m$-multiple differentiation, we obtain a system that can be solved with respect to all $(m+1)$ order derivatives of coordinates of $f$. In particular, a rank $m$ Killing tensor field on a connected manifold (that does not need to be compact) is uniquely determined by values of its derivatives of order $\leq m$ at one point. This allows us to estimate from above the dimension of the space of rank $m$ Killing fields by some quantity that is explicitly expressed through $m$ and $n$. We will use some version of this approach in Section 4 in studying rank 3 Killing tensor fields on the 2-torus. In this way, we obtain some new necessary condition related to the behavior of isolines of the Gaussian curvature. We observe also (although the observation is not used in the current paper) that the corresponding system for conformal Killing tensor fields possesses also a similar property \cite{DS}.

\section{The method of spherical harmonics for Killing tensor fields}

We will use the analysis of symmetric tensor fields which has been originally developed in \cite{Sh}. Then this machinery was more systematically presented in \cite{mb}; but some technical details are not included into the book although we need them here. The recent paper \cite{DS} contains \cite{Sh} as a proper subset. Therefore we will mostly refer the reader to \cite{DS} for proofs of technical statements.

By $i:S^m\rightarrow S^{m+2}$, we denote the operator of symmetric multiplication by the metric tensor, i.e., $if=fg=\sigma(f\otimes g)$. The adjoint of $i$ is the contraction $j$ with the metric tensor, it is defined in coordinates by $(jf)_{i_1\dots i_m}=g^{pq}f_{pqi_1\dots i_m}$. We will refer to $jf$ as the {\it trace} of $f$. Let $p:S^m\rightarrow S^m$ be the orthogonal projection onto the kernel of $j$.

Let ${\mbox{\rm Ker}}^mj$ be the subbundle of $S^m$ consisting of {\it trace free} tensors, i.e., of tensors $f$ satisfying $jf=0$. This terminology was introduced in \cite{DS}. We will use the terminology, although possibly ``the bundle of harmonic tensors'' is the more appropriate name for ${\mbox{\rm Ker}}^mj$. Observe that, for $f\in C^\infty({\mbox{\rm Ker}}^mj)$, the divergence
$\delta f$ is also a trace free field since the operators $j$ and $\delta$ commute \cite[Lemma 3.2]{DS}. Therefore the pair of mutually adjoint operators is defined
\begin{equation}
C^\infty({\mbox{\rm Ker}}^mj)\begin{array}{c}pd\\ \longrightarrow\\[-0.2cm] \longleftarrow\\-\delta\end{array} C^\infty({\mbox{\rm Ker}}^{m+1}j).
                         \label{2.0}
\end{equation}
The operator $\delta pd$ is elliptic as shown in \cite[Theorem 2.1]{DS}. The operators $\delta pd$ and $pd$ have coincident kernels as is seen from the equalities
$$
(\delta pdf,f)_{L^2}=-(pdf,df)_{L^2}=-(p^2df,df)_{L^2}=-(pdf,pdf)_{L^2}.
$$
Hence $pd$ has a finite-dimensional kernel.

Further formulas are a little bit different in the cases of tensor fields of even and odd rank. Therefore we will first discuss the case of an even rank and then we will present the corresponding formulas in the case of an odd rank. Both the cases can be united by a complication of notations.

By \cite[Lemma 2.3]{DS}, every field $f\in C^\infty(S^{2m})$ can be uniquely represented in the form
\begin{equation}
f=\sum\limits_{k=0}^m i^{m-k}f^k,\quad f^k\in C^\infty({\mbox{\rm Ker}}^{2k}j).
                         \label{2.1}
\end{equation}
Representation (\ref{2.1}) actually coincides with the expansion of the polynomial $F\in C^\infty(\Omega M)$ into the Fourier series in spherical harmonics with respect to the variable $\xi$. Therefore the field $f^k$ will be called the {\it harmonic of degree} $2k$ of the field $f$. In particular, $f^m=pf$ is the higher harmonic. For convenience, we assume also $f^k=0$ for $k>m$.
For an odd rank field $f\in C^\infty(S^{2m+1})$, representation (\ref{2.1}) remains true but $f^k\in C^\infty({\mbox{\rm Ker}}^{2k+1}j)$ now.

For tensor fields $f\in C^\infty(S^{2m})$ and $b\in C^\infty(S^{2m+1})$, the equation $df=b$ is equivalent to the following chain of equations \cite[Theorem 10.2]{DS} relating their harmonics:
\begin{equation}
pdf^k+\frac{2k+2}{n+4k+2}\delta f^{k+1}=b^k\quad (k=0,\dots,m),
                         \label{2.2}
\end{equation}
where $n=\mbox{dim}\,M$. For $f\in C^\infty(S^{2m+1})$ and $b\in C^\infty(S^{2m+2})$, the chain looks as follows:
\begin{equation}
\begin{aligned}
&\delta f^0=nb^0,\\
&pdf^k+\frac{2k+3}{n+4k+4}\delta f^{k+1}=b^{k+1}\quad (k=0,\dots,m).
\end{aligned}
                         \label{2.2'}
\end{equation}
Systems (\ref{2.2}) and (\ref{2.2'}) are main equations of the spherical harmonics method. These equations can be easily generalized to the case when the solution and right-hand side of the kinetic equation $HF=B$ are not polynomials but arbitrary smooth functions on $\Omega M$, as well as to the case of a more general equation containing terms responsible for the absorbtion and scattering.

Thus, $f\in C^\infty(S^{2m})$ is a Killing tensor field if and only if the following equations hold:
\begin{equation}
pdf^k+\frac{2k+2}{n+4k+2}\delta f^{k+1}=0\quad (k=0,\dots,m).
                         \label{2.3}
\end{equation}
Quite similarly, $f\in C^\infty(S^{2m+1})$ is a Killing tensor field if and only if the following equations hold:
\begin{equation}
\begin{aligned}
&\delta f^0=0,\\
&pdf^k+\frac{2k+3}{n+4k+4}\delta f^{k+1}=0\quad (k=0,\dots,m).
\end{aligned}
                         \label{2.3'}
\end{equation}

Recall \cite{mb} that $f\in C^\infty(S^{m})$ is called a {\it potential} tensor field if there exists $v\in C^\infty(S^{m-1})$ such that $f=dv$.

\begin{lemma} \label{L2.3}
If $f$ is a Killing tensor field, then the divergence $\delta f^k$ is a potential tensor field for every summand of representation (\ref{2.1}).
\end{lemma}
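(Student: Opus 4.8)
The plan is to argue by induction on the harmonic index $k$, combining the Killing chains (\ref{2.3}), (\ref{2.3'}) with two elementary structural facts. The first: the symmetric multiplication $i$ by the metric commutes with the inner derivative, since $\nabla g=0$ gives $d(iv)=d(gv)=(dg)v+g\,dv=i(dv)$; hence $i$ sends potential fields to potential fields. The second: for a trace-free field $u\in C^\infty({\mathrm{Ker}}^l j)$ one has
\[
 du=pdu+c_l\,i(\delta u)
\]
for some constant $c_l$ depending only on $n$ and $l$. This is implicit in the tensor analysis of \cite{DS} (the precise value of $c_l$ is irrelevant here); alternatively, a short coordinate computation shows that $j(du)$ is a constant multiple of $\delta u$ when $ju=0$, and since $j(\delta u)=\delta(ju)=0$ the field $du$ then has at most one nonzero trace, which forces the displayed form.

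For a Killing field $f\in C^\infty(S^{2m})$ the harmonics satisfy $f^k\in C^\infty({\mathrm{Ker}}^{2k}j)$, so the second fact turns (\ref{2.3}) into
\[
 \delta f^{k+1}=-\frac{n+4k+2}{2k+2}\bigl(df^k-c_{2k}\,i(\delta f^k)\bigr),\qquad k=0,\dots,m .
\]
Now induct on $k$. The base case is $\delta f^0=0$ (as $f^0\in C^\infty(S^0)$, its divergence lies in $C^\infty(S^{-1})=0$), which is trivially a potential. For the inductive step, if $\delta f^k$ is a potential then $df^k$ is a potential by definition, $i(\delta f^k)$ is a potential by the first fact, and therefore so is the linear combination $\delta f^{k+1}$. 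Since $\delta f^k=0$ for $k>m$, this settles the even-rank case. The odd-rank case $f\in C^\infty(S^{2m+1})$ is identical, now with $f^k\in C^\infty({\mathrm{Ker}}^{2k+1}j)$ and with (\ref{2.3'}) replacing (\ref{2.3}): the base case $\delta f^0=0$ is literally the first equation of (\ref{2.3'}), and the recursion $\delta f^{k+1}=-\frac{n+4k+4}{2k+3}\bigl(df^k-c_{2k+1}\,i(\delta f^k)\bigr)$ propagates the property ``potential'' exactly as above.

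The only nonroutine ingredient is the trace-decomposition identity $du=pdu+c_l\,i(\delta u)$ for trace-free $u$; once it is available, the rest is bookkeeping with the chains (\ref{2.3})/(\ref{2.3'}) plus the trivial observation that $i$ preserves potential fields. I therefore expect the write-up to spend its effort either invoking the appropriate lemma of \cite{DS} or carrying out the brief computation of $j(du)$.
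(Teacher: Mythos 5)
Your proof is correct and follows essentially the same route as the paper: induction in $k$ through the chains (\ref{2.3})/(\ref{2.3'}), the identity $du=pdu+c\,i(\delta u)$ for trace-free $u$ (the paper cites \cite[Lemma 2.4]{DS} for it, with the explicit constant), and the permutability of $i$ and $d$ (\cite[Lemma 3.2]{DS}). The only cosmetic slip is the phrase ``if $\delta f^k$ is a potential then $df^k$ is a potential by definition'' --- $df^k$ is potential unconditionally, and the induction hypothesis is needed only for the term $i(\delta f^k)$.
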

\begin{proof}
For definiteness, we consider a Killing field $f$ of even rank. For $\delta f^0=0$, the statement is trivial. Since $p$ coincides with the identity operator on $S^1$, equation (\ref{2.3}) for $k=0$ can be rewritten in the form $df^0+\frac{2}{n+2}\delta f^1=0$. This implies $\delta f^1$ is a potential field. Next, we continue the proof by induction in $k$. By \cite[Lemma 2.4]{DS}, the equality
$$
dpf^k=pdf^k+\frac{2k}{n+4k-2}i\delta f^k
$$
holds for any rank $2k$ tensor field $f^k$. In our case, $pf^k=f^k$ since $jf^k=0$ and the previous formula is simplified to the following one:
$$
pdf^k=df^k-\frac{2k}{n+4k-2}i\delta f^k.
$$
By the induction hypothesis, $\delta f^k=dv$ for some $v$. Substituting this expression into the previous formula and taking the permutability of $i$ and $d$ \cite[Lemma 3.2]{DS} into account, we obtain
$$
pdf^k=d\Big(f^k-\frac{2k}{n+4k-2}iv\Big).
$$
This gives together with (\ref{2.3})
$$
d\Big(f^k-\frac{2k}{n+4k-2}iv\Big)+\frac{2k+2}{n+4k+2}\delta f^{k+1}=0
$$
and we see $\delta f^{k+1}$ is a potential field.
\end{proof}

\begin{theorem} \label{Th2.2}
A rank $m$ Killing tensor field is determined by its higher harmonic uniquely up to a summand of the form $iv$ where $v$ is an arbitrary rank $m-2$ Killing tensor field. A tensor field
$f\in C^\infty({\mbox{\rm Ker}}^{m}j)$ is the higher harmonic of some Killing tensor field if and only if it satisfies the equation
\begin{equation}
pdf=0
                         \label{2.4}
\end{equation}
and has the potential divergence, i.e., $\delta f=dv$ for some $v$.
\end{theorem}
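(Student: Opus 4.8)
The plan is to extract both assertions directly from the chain of equations (\ref{2.3}) (resp. (\ref{2.3'})), using Lemma \ref{L2.3} as the key structural input. I treat the even-rank case $m=2m'$; the odd case is identical after replacing (\ref{2.3}) by (\ref{2.3'}).

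First, the uniqueness statement. Suppose $f$ and $\tilde f$ are rank $m$ Killing tensor fields with the same higher harmonic, $pf=p\tilde f$, i.e.\ $f^{m'}=\tilde f^{m'}$. Set $h=f-\tilde f$; this is again a Killing field, and its top harmonic vanishes, $h^{m'}=ph=0$. I want to show $h=iv$ for some rank $m-2$ Killing field $v$. The idea is to read equation (\ref{2.3}) for $h$ downward from $k=m'$. At $k=m'$ the equation reads $pd h^{m'}+\frac{2m'+2}{n+4m'+2}\delta h^{m'+1}=0$; since $h^{m'}=0$ and $h^{m'+1}=0$, this is vacuous. I must show that $h^{m'-1}$ is itself the higher harmonic of a Killing field $v$ of rank $m-2$ and that $h=iv$. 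For this I would use the uniqueness-at-a-point principle quoted in the introduction (\cite[Theorem 2.2.2]{mb}): a rank $m$ Killing field is determined by its jet of order $\leq m$ at one point. Alternatively, and more in the spirit of the harmonic decomposition, I would argue that the harmonics $h^k$ for $k<m'$ are constrained by (\ref{2.3}) and by the relation $dpf^k = pdf^k + \frac{2k}{n+4k-2}\,i\delta f^k$ from \cite[Lemma 2.4]{DS} used in the proof of Lemma \ref{L2.3}: knowing that all lower harmonics of a Killing field are governed by the same type of system, one sees that $h$, having no top harmonic, must be of the form $iv$ with $v$ Killing of rank $m-2$. The cleanest route is probably: $h=\sum_{k=0}^{m'} i^{m'-k}h^k$ with $h^{m'}=0$, so $h=i\bigl(\sum_{k=0}^{m'-1} i^{m'-1-k}h^k\bigr)=iv$ where $v:=\sum_{k=0}^{m'-1} i^{m'-1-k}h^k\in C^\infty(S^{m-2})$; then $0=dh=d(iv)=i(dv)$ by permutability of $i$ and $d$ \cite[Lemma 3.2]{DS}, and since $i$ is injective, $dv=0$, i.e.\ $v$ is a rank $m-2$ Killing field. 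This also re-proves the ``$iv$'' part of the existence direction.

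Second, the characterization of higher harmonics. The ``only if'' direction is immediate: if $f=pg$ for a Killing field $g$, then $f=g^{m'}$, equation (\ref{2.3}) at $k=m'$ gives $pdf=pdg^{m'}=-\frac{2m'+2}{n+4m'+2}\delta g^{m'+1}=0$ since $g^{m'+1}=0$, and Lemma \ref{L2.3} applied to $g$ gives that $\delta f=\delta g^{m'}$ is potential. For the ``if'' direction, I am given $f\in C^\infty({\mbox{\rm Ker}}^m j)$ with $pdf=0$ and $\delta f=dv_0$ for some $v_0\in C^\infty(S^{m-2})$; I must build a Killing field $g$ of rank $m$ with $pg=f$. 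I would construct the lower harmonics $g^k$ recursively downward, solving the chain (\ref{2.3}). Starting from $g^{m'}:=f$, equation (\ref{2.3}) at $k=m'-1$ demands $pdg^{m'-1}=-\frac{2m'}{n+4m'-2}\delta g^{m'}=-\frac{2m'}{n+4m'-2}\delta f$, which is (a multiple of) a potential field $dv_0$; by the identity $pd(\cdot) = d(\cdot)-\frac{2k}{n+4k-2}i\delta(\cdot)$ on trace-free fields and the hypothesis that $\delta f$ is potential, one can solve for a trace-free $g^{m'-1}$ — essentially taking $g^{m'-1}$ to absorb $iv_0$ as in the induction step of Lemma \ref{L2.3}. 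Then $\delta g^{m'-1}$ is again potential (same computation as in Lemma \ref{L2.3}), so the next equation at $k=m'-2$ is again solvable, and so on down to $k=0$. Assembling $g=\sum_{k=0}^{m'}i^{m'-k}g^k$ yields a field with $pg=f$ satisfying all of (\ref{2.3}), hence Killing.

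The main obstacle is the inductive solvability at each step of the downward recursion: at level $k$ one needs a trace-free field $g^k$ with $pd g^k$ equal to a prescribed field whose only controlled property is that it equals $-\frac{2k+2}{n+4k+2}\delta g^{k+1}$ with $\delta g^{k+1}$ potential. The point is that $pd$ is not surjective, so one cannot solve $pdg^k = w$ for arbitrary $w$; one exploits that $w$ is potential, writing $w$ (up to the elliptic operator's cokernel) as $d$ of something and using the relation $pdg^k = dg^k - \frac{2k}{n+4k-2}\,i\delta g^k$ together with permutability of $d$ with $i$ to reduce solvability of $pdg^k=w$ to solvability of a $d$-equation, which is handled by the potentiality hypothesis propagating down the chain exactly as in Lemma \ref{L2.3}. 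Making this propagation precise — i.e.\ checking that ``$\delta g^{k+1}$ potential'' is exactly the condition that makes the $k$-th equation solvable and that the solution again has potential divergence — is the crux; everything else is bookkeeping with the coefficients and the two-line identities from \cite{DS}. For the odd-rank case one additionally checks the extra bottom equation $\delta g^0=0$, which is automatically the $k=-1$ instance of the same pattern.
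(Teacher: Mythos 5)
Your uniqueness argument and the ``only if'' direction are essentially correct and close to the paper's. Writing $h=f-\tilde f=iv$ with $v=\sum_k i^{m'-1-k}h^k$ and deducing $dv=0$ from $0=dh=d(iv)=i(dv)$ together with injectivity of $i$ (which follows from uniqueness of the decomposition \eqref{2.1}) is a legitimate, even slightly cleaner, variant of the paper's observation that, once the top harmonic vanishes, the remaining equations of the chain \eqref{2.3} are exactly the Killing equations for $v$; the preliminary detour through jet-determination is unnecessary. Necessity of \eqref{2.4} and of the potentiality of $\delta f$ is, as you say, the top equation of the chain plus Lemma \ref{L2.3}, as in the paper.

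The sufficiency direction, however, has a genuine gap, which you yourself flag as ``the crux.'' Your downward recursion needs, at each level $k$, a trace-free $g^k$ solving $pdg^k=w$ for a prescribed $w$, and then a verification that $\delta g^k$ is again potential. Neither step is established: $pd$ is not surjective (on the torus its cokernel is exactly what Theorem \ref{Th3.1} computes), and the propagation of potentiality is \emph{not} ``the same computation as in Lemma \ref{L2.3}'' --- that lemma goes \emph{up} the chain of an already given Killing field, whereas you must solve equations going \emph{down}, extracting potentiality of $\delta g^k$ from an identity of the form $i\,\delta g^k=\mbox{const}\cdot d(\cdot)$, which is a different and unproved claim. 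The missing idea --- and the paper's actual proof --- is to use the potential $v$ of $\delta f$ globally rather than level by level: expand $v=\sum_{k=0}^{m-1}i^{m-k-1}v^k$ into spherical harmonics and decompose the single hypothesis $dv=\delta f$ via the chain \eqref{2.2}; since $\delta f$ is trace free, its only nonzero harmonic is the top one, so $dv=\delta f$ is equivalent to $pdv^{m-1}=\delta f$ together with the homogeneous chain $pdv^k+\frac{2k+2}{n+4k+2}\delta v^{k+1}=0$ for $k\le m-2$. A constant rescaling of the $v^k$ then furnishes the lower harmonics of a Killing field whose top harmonic is $f$, so no solvability of any $pd$-equation ever arises: the harmonics of $v$ \emph{are} the solution. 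Without this (or an equivalent replacement for your recursion step), your sufficiency argument is incomplete.
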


\begin{proof}
We consider the case of an even rank.
If the higher harmonic of a Killing field $f\in C^\infty(S^{2m})$ is equal to zero, then (\ref{2.1}) can be written in the form $f=i\sum_{k=0}^{m-1}i^{m-k-1}f^k$. The last equation of chain (\ref{2.3}) holds trivially and other equations of the chain mean $v=\sum_{k=0}^{m-1}i^{m-k-1}f^k$ is a Killing field.

Necessity. The potentiality of $\delta f^m$ has been proved in Lemma \ref{L2.3} and equation (\ref{2.4}) for $f^m$ coincides with (\ref{2.3}) for $k=m$.

Sufficiency. Assume $f^m\in C^\infty({\mbox{\rm Ker}}^{2m}j)$ to satisfy the equation $pdf^m=0$ and to have the potential divergence, i.e.,
\begin{equation}
dv=\delta f^m
                         \label{2.5}
\end{equation}
for some $v\in C^\infty(S^{2m-2})$. We expand $v$ into the sum of spherical harmonics
$$
v=\sum\limits_{k=0}^{m-1}i^{m-k-1}v^k,\quad jv^k=0.
$$
The corresponding sum for the field $\delta f^m$ consists of one summand. Therefore (\ref{2.5}) is equivalent to the following chain of equations:
$$
\begin{aligned}
&pdv^k+\frac{2k+2}{n+4k+2}\delta v^{k+1}=0\quad (k=0,\dots,m-2),\\
&pdv^{m-1}=\delta f^m.
\end{aligned}
$$
Setting $f^k=v^k$ for $0\leq k\leq m-2$ and $f^{m-1}=-\frac{2m}{n+4m-2}v^{m-1}$, we rewrite the system in the form
$$
pdf^k+\frac{2k+2}{n+4k+2}\delta f^{k+1}=0\quad (k=0,\dots,m-1).
$$
Together with the equation $pdf^m=0$, this gives (\ref{2.3}), i.e., $f=\sum_{k=0}^mi^{m-k}f^k$ is a Killing field.
\end{proof}

Assume we have found the kernel of the operator $pd$ from (\ref{2.0}) and let tensor fields $(f_1,\dots,f_r)$ constitute a basis of the kernel. When does the tensor field
$$
f=\alpha_1f_1+\dots+\alpha_rf_r\quad (\alpha_i\in{\mathbb C})
$$
serve as the higher harmonic of a Killing field? By Theorem \ref{Th2.2}, the potentiality of $\delta f$, i.e., the solvability of the equation
\begin{equation}
dv=\alpha_1\delta f_1+\dots+\alpha_r\delta f_r
                         \label{2.7}
\end{equation}
is the necessary and sufficient condition.
The sequence $\delta f_i\ (1\leq i\leq r)$ can be linearly dependent. We choose a maximal linearly independent subsystem of the sequence and, changing the numeration, denote the subsequence as $(\delta f_1,\dots,\delta f_s)$ with some
$s\leq r$. Then (\ref{2.7}) is replaced by the equation with less number of parameters
\begin{equation}
dv=\alpha_1\delta f_1+\dots+\alpha_s\delta f_s
                         \label{2.8}
\end{equation}
and our problem is reduced to the question: for what coefficients $(\alpha_1,\dots,\alpha_s)$ is equation (\ref{2.8}) solvable?

In view of (\ref{2.0}), the following definition is suitable: $f\in C^\infty({\mbox{\rm Ker}}^mj)$ is said to be a $j$-{\it potential} field if there exists $v\in C^\infty({\mbox{\rm Ker}}^{m-1}j)$ such that $f=pdv$. The statements ``$f$ is a potential field'' and ``$f$ is a $j$-potential field'' are not related, i.e., any of them does not imply another one for an arbitrary
$f\in C^\infty({\mbox{\rm Ker}}^mj)$. However, for a Killing field $f$, the divergence $\delta f^k$ of every harmonic is a potential and $j$-potential tensor field. The first statement is proved in Lemma \ref{L2.3} and the second statement is directly seen from (\ref{2.3}).

For the sake of completeness, we also present the following easy statement.

\begin{lemma} \label{L2.1}
Assume, for some $m\geq2$, a Riemannian manifold $(M,g)$ do not admit irreducible Killing tensor fields of ranks $1,\dots,m-1$. A rank $m$ Killing tensor field on $(M,g)$ is irreducible unless its higher harmonic is identically equal to zero.
\end{lemma}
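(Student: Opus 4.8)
I would prove the contrapositive: a \emph{reducible} Killing tensor field $f$ of rank $m\ge2$ has vanishing higher harmonic $pf$. The main step is an auxiliary claim, proved by induction on the rank $r$: \emph{under the hypothesis of the lemma, every Killing tensor field on $(M,g)$ of rank $r$ with $1\le r\le m-1$ equals $0$ when $r$ is odd and equals $c\,g^{r/2}$ for a constant $c$ when $r$ is even.} For $r=1$ this holds because a rank $1$ field cannot be a product of positive-rank fields, hence it is irreducible, and the hypothesis (valid since $1\le m-1$) rules out irreducible rank $1$ fields. For $r=2$ (needed when $m\ge3$) a rank $2$ Killing field is not irreducible, hence is either $cg$ or a finite sum of products of rank $1$ Killing fields; since the latter do not exist, it is $cg$. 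For $r\ge3$ a rank $r$ Killing field $w$ has rank $\le m-1$, so $w$ is not irreducible, i.e.\ $w=\sum_i u_iv_i$ with $u_i,v_i$ Killing of positive ranks strictly less than $r$; by the induction hypothesis each factor is a constant multiple of a power of $g$, and products and sums of such fields are again of this form, so $w$ is a constant multiple of $g^{r/2}$ (necessarily $0$ when $r$ is odd).

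Granting the auxiliary claim, the lemma follows quickly. Let $f$ be a reducible Killing field of rank $m$. If $m=2$, then $f=cg$ (the only reducible rank $2$ fields, since there are no rank $1$ Killing fields). If $m\ge3$, then $f=\sum_i u_iv_i$ with $u_i,v_i$ Killing of positive ranks $a_i,b_i$, $a_i+b_i=m$; the auxiliary claim gives $u_i=c_ig^{a_i/2}$, $v_i=c_i'g^{b_i/2}$, so $f=\big(\sum_i c_ic_i'\big)\,g^{m/2}$. In either case $f$ is a constant multiple of $g^{m/2}$, so $m$ is even. It remains to observe that $p\big(g^{m/2}\big)=0$: since $g^{m/2}=i\big(g^{m/2-1}\big)$ belongs to the image of the symmetric multiplication operator $i$, which is orthogonal to the kernel of $j$, it is annihilated by the projection $p$ onto that kernel. (Equivalently, $g^{m/2}$ corresponds under $f\mapsto F$ to the polynomial $|\xi|^m$, whose restriction to $\Omega M$ is constant and therefore carries no spherical harmonic of positive degree.) Hence $pf=0$, i.e.\ the higher harmonic of $f$ vanishes.

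The argument is entirely algebraic, so I do not anticipate an analytic obstacle. The point requiring care is to set up the induction around the right strengthened statement --- merely asserting ``$pf=0$'' does not induct --- and to use the elementary but essential observation that a rank $1$ Killing field is automatically irreducible, which both starts the induction and, combined with the hypothesis, forces all Killing fields of rank $\le m-1$ to be multiples of powers of $g$. Tracking ranks through the successive factorizations is then routine, since each factor has rank strictly smaller than the field it comes from, hence at most $m-1$, so the hypothesis applies to it at every stage.
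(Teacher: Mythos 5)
Your proof is correct and follows essentially the same route as the paper: under the hypothesis every Killing field of rank at most $m-1$ is a constant multiple of a power of $g$, so a reducible rank $m$ field is $c\,g^{m/2}$ for even $m$ (and zero for odd $m$), whose higher harmonic vanishes because $g^{m/2}=i\big(g^{m/2-1}\big)$ lies in the image of $i$ and is therefore annihilated by the projection $p$ onto $\mbox{\rm Ker}\,j$. The only difference is presentational: the paper invokes in one line the fact that every reducible Killing field is a polynomial in irreducible Killing fields and the metric tensor, whereas you establish the needed special case by an explicit induction on the rank.
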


\begin{proof}
Recall we have eliminated the metric tensor from the list of irreducible Killing fields. Every reducible Killing field can be represented as an integer coefficients polynomial of several irreducible Killing fields and of the metric tensor. This means under hypotheses of the lemma that, for an odd $m$, every reducible rank $m$ Killing tensor field is identically equal to zero; and for $m=2k$, every  reducible rank $m$ Killing tensor field is of the form $cg^k\ (c=\mbox{const})$. The higher harmonic of such a field is equal to zero.
\end{proof}

In conclusion of the section we discuss the two-dimensional case. In this case the bundle $S^m$ has dimension $m+1$ and ${\mbox{\rm Ker}}^mj$ is the two-dimensional bundle for $m>0$.

In a neighborhood of every point of a two-dimensional Riemannian manifold $(M,g)$, one can introduce {\it isothermal coordinates} $(x,y)$ such that the metric is expressed by
\begin{equation}
g=e^{2\mu(x,y)}(dx^2+dy^2)=\lambda(z)|dz|^2\quad(z=x+iy,\ \lambda(z)=e^{2\mu(x,y)}).
                            \label{2.9}
\end{equation}

Recall $\Omega M$ is the unit circle bundle. If $(x,y)$ are isothermal coordinates on $M$ and $(x,y,\xi^1,\xi^2)$ are corresponding coordinates on $TM$, then the coordinates $(x,y,\theta)$ on $\Omega M$ are defined by $\xi^1=e^{-\mu}\cos\theta, \xi^2=e^{-\mu}\sin\theta$. In these coordinates the differentiation along the geodesic flow is expressed as follows \cite[\S11]{DS}:
\begin{equation}
H=e^{-\mu}\Big(\cos\theta\frac{\partial}{\partial x}+\sin\theta\frac{\partial}{\partial y}+
(-\mu_x\sin\theta+\mu_y\cos\theta)\frac{\partial}{\partial\theta}\Big).
                            \label{H}
\end{equation}

Given a field $f\in C^\infty({\mbox{\rm Ker}}^mj)$, let us write down the equation $pdf=0$ in isothermal coordinates. The condition $jf=0$ means that
\begin{equation}
f_{\underbrace{\scriptstyle 1\dots 1}_{m-k}\underbrace{\scriptstyle 2\dots 2}_k}+f_{\underbrace{\scriptstyle 1\dots 1}_{m-k-2}\underbrace{\scriptstyle 2\dots 2}_{k+2}}=0\quad(0\leq k\leq m-2).
                            \label{2.10}
\end{equation}
The function $F\in C^\infty(\Omega M)$ corresponding to the field $f$ is obtained from the polynomial $f_{i_1\dots i_m}\xi^{i_1}\dots\xi^{i_m}$ by substituting $\xi^1=e^{-\mu}\cos\theta, \xi^2=e^{-\mu}\sin\theta$.
This gives together with (\ref{2.10})
\begin{equation}
F(x,y,\theta)=e^{-m\mu(x,y)}\big(f_{1\dots1}(x,y)\cos m\theta+f_{1\dots12}(x,y)\sin m\theta\big).
                            \label{2.11}
\end{equation}
By \cite[Lemma 5.1]{DS}, the equation $pdf=0$ is equivalent to the statement: in the Fourier series for the function $HF$, the coefficients at $\cos(m+1)\theta$ and $\sin(m+1)\theta$ are equal to zero. Using (\ref{H}) and (\ref{2.11}), we infer after easy calculations that, for a field $f\in C^\infty({\mbox{\rm Ker}}^mj)$, the equation $pdf=0$ is equivalent to the Cauchy -- Riemann system
\begin{equation}
\frac{\partial(e^{-2m\mu}f_{1\dots1})}{\partial x}-\frac{\partial(e^{-2m\mu}f_{1\dots12})}{\partial y}=0,\quad
\frac{\partial(e^{-2m\mu}f_{1\dots1})}{\partial y}+\frac{\partial(e^{-2m\mu}f_{1\dots12})}{\partial x}=0.
                            \label{2.12}
\end{equation}

\section{Killing tensor fields on the two-dimensional torus}

Recall \cite[\S6.5]{BF} there exists a global isothermal coordinate system on the two-dimensional torus ${\mathbb T}^2$ endowed with a Riemannian metric $g$. More precisely, there exists a lattice $\Gamma\subset{\mathbb R}^2={\mathbb C}$ such that ${\mathbb T}^2={\mathbb C}/\Gamma$ and the metric $g$ is expressed by (\ref{2.9}),
where $\lambda(z)=e^{2\mu(x,y)}$ is a $\Gamma$-periodic smooth function on the plane. Global isothermal coordinates on the torus are defined uniquely up to coordinate transformations of two kinds: either $z=az'+b$ or $z=a\bar z{}'+b$ with complex constants $a\neq 0$ and $b$. Transformations of the second kind can be eliminated from consideration if we fix an orientation of the torus and consider coordinate systems agreed with the orientation. Moreover, studying the invariancy of various formulas, we will restrict ourselves by considering coordinate transformations of the form $z=az'$ since the shift by a constant vector preserves tensor formulas. The group of the latter transformations coincides with the multiplicative group ${\mathbb C}\setminus\{0\}$. The commutativity of the group allows us to introduce the following definition ($\textsl{i}$ stands for the imaginary unit in what follows).

\begin{definition} \label{D3.1}
A pseudovector field $X$ of weight $m$ on a Riemannian torus $({\mathbb T}^2,g)$ is a map sending a global isothermal coordinate system to a pair  $(X^1,X^2)$ of functions on ${\mathbb T}^2$ which are transformed by the rule
\begin{equation}
(X^1+\textsl{i}X^2)(z)=a^{m}(X'{}^1+\textsl{i}X'{}^{2})(z'),\quad (X^1-\textsl{i}X^2)(z)=\bar a{}^{m}(X'{}^1-\textsl{i}X'{}^{2})(z')
                         \label{3.0}
\end{equation}
under the coordinate change $z=az'$. If the functions $(X^1,X^2)$ are constant, we speak on a constant pseudovector $X$ of weight $m$. Quite similarly, a pseudocovector field (or 1-pseudoform) $\omega$ of weight $m$ on $({\mathbb T}^2,g)$ is a map sending a global isothermal coordinate system to a pair $(\omega_1,\omega_2)$ of functions on ${\mathbb T}^2$ which are transformed by the rule
$$
\omega_1+\textsl{i}\omega_2=\bar a{}^{-m}(\omega'_1+\textsl{i}\omega'_{2}),\quad \omega_1-\textsl{i}\omega_2=a^{-m}(\omega'_1-\textsl{i}\omega'_{2}).
$$
\end{definition}

If $X$ and $\omega$ are respectively a  pseudovector and pseudocovector fields of the same weight, then $X^1\omega_1+X^2\omega_2$ is an invariant function on the torus.
Let us give also the following remark on (\ref{3.0}). If $X^1$ and $X^2$ are real functions in one global isothermal coordinate system, then the same is true in any global isothermal coordinate system; in such the case we speak on a {\it real} pseudovector field $X$ of weight $m$. For a real $X$ two equations on (\ref{3.0}) are equivalent. In the general case, the equations are independent.

\begin{theorem} \label{Th3.1}
For a two-dimensional Riemannian torus, the kernel of the elliptic operator
\begin{equation}
\delta pd:C^\infty({\mbox{\rm Ker}}^mj)\rightarrow C^\infty({\mbox{\rm Ker}}^{m}j)
                         \label{3.1}
\end{equation}
is the two-dimensional space consisting of tensor fields $f\in C^\infty({\mbox{\rm Ker}}^mj)$ whose coordinates with respect to a global isothermal coordinate system are of the form
\begin{equation}
f_{1\dots1}=e^{2m\mu}c^1,\quad f_{1\dots12}=e^{2m\mu}c^2,
                         \label{3.2}
\end{equation}
where $c=(c^1,c^2)$ is a constant pseudovector of weight $m$. The range of operator (\ref{3.1}) consists of tensor fields $f\in C^\infty({\mbox{\rm Ker}}^mj)$ whose all components with respect to a global isothermal coordinate system have zero mean values, i.e.,
\begin{equation}
\int\limits_{{\mathbb T}^2}f_{i_1\dots i_m}\,d\sigma=0,
                         \label{3.3}
\end{equation}
where $d\sigma=e^{2\mu}\,dxdy$ is the area form.
\end{theorem}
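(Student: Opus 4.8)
The plan is to treat the two assertions separately, in each case reducing to the equation $pdf=0$, whose kernel coincides with that of $\delta pd$ by the $L^2$-identities displayed after (\ref{2.0}), and which in a global isothermal coordinate system is the Cauchy--Riemann system (\ref{2.12}).

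For the kernel, I would first note that (\ref{2.12}) says precisely that $w=e^{-2m\mu}(f_{1\dots1}+\textsl{i}\,f_{1\dots12})$ is holomorphic on ${\mathbb C}$. Since $\mu$ and the components of $f$ are $\Gamma$-periodic, $w$ is a bounded entire function, hence a constant $c^1+\textsl{i}\,c^2$ by Liouville's theorem, which gives (\ref{3.2}); conversely any constants $(c^1,c^2)$ produce, via (\ref{3.2}) and the trace relations (\ref{2.10}), a solution of $pdf=0$. The map $(c^1,c^2)\mapsto f$ is clearly linear and injective, so the kernel is two-dimensional, and it remains to check that $c=(c^1,c^2)$ is a constant pseudovector of weight $m$ in the sense of Definition \ref{D3.1}. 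For this I would compute the transformation of $(f_{1\dots1},f_{1\dots12})$ under $z=az'$: writing $a=p+\textsl{i}\,q$, expressing all components of the covariant tensor $f$ through $f_{1\dots1},f_{1\dots12}$ by (\ref{2.10}), and collecting the resulting binomial sums into $\mathrm{Re}(a^m)$ and $\mathrm{Im}(a^m)$, one obtains $f'_{1\dots1}+\textsl{i}\,f'_{1\dots12}=\bar a^{\,m}(f_{1\dots1}+\textsl{i}\,f_{1\dots12})$, i.e. $(f_{1\dots1},f_{1\dots12})$ is a pseudocovector of weight $m$. Together with $e^{2m\mu'}=|a|^{2m}e^{2m\mu}$ (immediate from $g=\lambda|dz|^2=\lambda|a|^2|dz'|^2$) this gives $c^1+\textsl{i}\,c^2=a^m(c'^1+\textsl{i}\,c'^2)$, which is exactly (\ref{3.0}).

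For the range, I would use that $\delta pd$ is elliptic (as cited after (\ref{2.0})) and self-adjoint --- indeed $(pd)^*=-\delta$ forces $(\delta pd)^*=\delta pd$ --- so that on the compact torus its range is the $L^2$-orthogonal complement of its kernel, i.e. of $\ker(pd)$. It then suffices to identify $(\ker pd)^\perp$. Using $g^{ij}=e^{-2\mu}\delta^{ij}$ and (\ref{2.10}), the pointwise inner product of two trace-free rank $m$ fields collapses to $\langle f,h\rangle_g=2^{m-1}e^{-2m\mu}(f_{1\dots1}h_{1\dots1}+f_{1\dots12}h_{1\dots12})$; pairing $f$ with an element $h$ of $\ker(pd)$ given by (\ref{3.2}) and integrating against $d\sigma=e^{2\mu}\,dxdy$ yields
$$
(f,h)_{L^2}=2^{m-1}\Big(c^1\int_{{\mathbb T}^2}f_{1\dots1}\,d\sigma+c^2\int_{{\mathbb T}^2}f_{1\dots12}\,d\sigma\Big).
$$
Since $(c^1,c^2)$ is arbitrary, $f\perp\ker(pd)$ is equivalent to $\int_{{\mathbb T}^2}f_{1\dots1}\,d\sigma=\int_{{\mathbb T}^2}f_{1\dots12}\,d\sigma=0$, and by (\ref{2.10}) every component of a trace-free $f$ equals $\pm f_{1\dots1}$ or $\pm f_{1\dots12}$, so this is precisely condition (\ref{3.3}).

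I expect the main obstacle to be the middle step of the kernel argument: the bookkeeping required to push the covariant-tensor transformation law of $f$ through the trace-free reduction under the conformal change $z=az'$ and to recognize that the binomial sums assemble into powers of $a$, so that the rule matches Definition \ref{D3.1}. The remaining ingredients are either already available in the excerpt or short direct computations.
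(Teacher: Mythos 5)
Your proposal is correct and follows essentially the same route as the paper: reduce $\ker(\delta pd)$ to $\ker(pd)$, use the Cauchy--Riemann system (\ref{2.12}) to see that $e^{-2m\mu}(f_{1\dots1}+\textsl{i}f_{1\dots12})$ is holomorphic and hence constant on the compact torus, check the pseudovector transformation law for $c$, and obtain the range statement from ellipticity and self-adjointness of $\delta pd$. The only difference is that you carry out explicitly the identification of the orthogonal complement of the kernel (the $2^{m-1}$ inner-product computation), which the paper leaves implicit in the phrase ``follows since $\delta pd$ is a self-adjoint operator,'' and your computations there are accurate.
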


\begin{proof}
First of all we observe all coordinates of a field $f$ are determined by (\ref{3.2}) in view of (\ref{2.10}).

As we have mentioned before, the kernel of operator (\ref{3.1}) coincides with the kernel of $pd$.
If a tensor field $f\in C^\infty({\mbox{\rm Ker}}^mj)$ satisfies the equation $pdf=0$, then by (\ref{2.11}), $e^{-2m\mu}(f_{1\dots1}+\textsl{i}f_{1\dots12})$ is a holomorphic function on the torus. Hence it is a constant function, i.e., equalities (\ref{3.2}) with some complex constants $c^1$ and $c^2$ hold in any global isothermal coordinate system. These constants are real in the case of a real $f$. Starting with the rule of transforming components of a tensor field under a coordinate change, one easily checks $c=(c^1,c^2)$ is a constant pseudovector of weight $m$. The statement on the kernel of $\delta pd$ is thus proved. The statement on the range follows since $\delta pd$ is a self-adjoint operator.
\end{proof}

In the case of $m=1$, Theorem \ref{Th3.1} leads to the well known Clairuat integral for geodesics on a surface of revolution. Indeed, let $f$ be a Killing covector field on a Riemannian torus. The field $f$ belongs to the kernel of operator (\ref{3.1}) since ${\mbox{\rm Ker}}^1j=S^1$. By Theorem \ref{Th3.1}, $f_1=e^{2\mu}c^1,\ f_2=e^{2\mu}c^2$ in global isothermal coordinates for some constant vector $c$. Since $f$ is a Killing field, the function
$$
f_1\dot x+f_2\dot y=e^{2\mu}(c^1\dot x+c^2\dot y)
$$
is constant on every geodesic $\gamma(t)=\big(x(t),y(t)\big)$. Without lost of generality, we can assume $\|\dot\gamma\|^2=e^{2\mu}(\dot x{}^2+\dot y{}^2)=1$, i.e., $\dot x=e^{-\mu}\cos\varphi,\ \dot y=e^{-\mu}\sin\varphi$ where $\varphi=\varphi(t)$ is the angle between the geodesic and the coordinate line $y=\mbox{const}$. Therefore
$e^{\mu}(c^1\cos\varphi+c^2\sin\varphi)=\mbox{\rm const}$ on every geodesic. This is just the Clairuat integral in isothermal coordinates. In particular, if global isothermal coordinates are chosen so that $c=(1,0)$, the Clairuat integral takes its traditional form:
$e^{\mu}\cos\varphi=\mbox{\rm const}$.

\begin{corollary} \label{C3.1}
A tensor field $f\in C^\infty({\mbox{\rm Ker}}^mj)$ can be uniquely represented in the form
\begin{equation}
f=\tilde f+f^s,
                         \label{3.4}
\end{equation}
where $\tilde f$ belongs to the range of the operator $pd:C^\infty({\mbox{\rm Ker}}^{m-1}j)\rightarrow C^\infty({\mbox{\rm Ker}}^{m}j)$ and $f^s\in C^\infty({\mbox{\rm Ker}}^{m}j)$ has the zero divergence: $\delta f^s=0$.
\end{corollary}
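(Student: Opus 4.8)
The plan is to obtain (\ref{3.4}) as a Hodge-type splitting associated with the mutually adjoint pair $(pd,-\delta)$ of (\ref{2.0}); the only genuinely analytic ingredient will be the ellipticity of $\delta pd$ noted after (\ref{2.0}).

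First I would dispose of uniqueness, which is purely formal. Write $P$ for the operator $pd\colon C^\infty(\mathrm{Ker}^{m-1}j)\to C^\infty(\mathrm{Ker}^{m}j)$ and let $\delta\colon C^\infty(\mathrm{Ker}^{m}j)\to C^\infty(\mathrm{Ker}^{m-1}j)$ be the corresponding divergence. The subspaces $\mathrm{range}\,P$ and $\ker\delta$ of $C^\infty(\mathrm{Ker}^{m}j)$ are $L^2$-orthogonal: if $\tilde f=pd\,v$ and $\delta h=0$, then $(\tilde f,h)_{L^2}=(pd\,v,h)_{L^2}=-(v,\delta h)_{L^2}=0$, since $-\delta$ is the adjoint of $pd$. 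Taking $h=\tilde f$ shows the two subspaces meet only in $0$; hence if $f=\tilde f_1+f^s_1=\tilde f_2+f^s_2$ are two representations of the stated form, then $\tilde f_1-\tilde f_2=f^s_2-f^s_1$ lies in the intersection and therefore vanishes.

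For existence, given $f\in C^\infty(\mathrm{Ker}^{m}j)$, I would solve
$$
\delta pd\,v=\delta f,\qquad v\in C^\infty(\mathrm{Ker}^{m-1}j),
$$
and then set $\tilde f:=pd\,v$ and $f^s:=f-\tilde f$; automatically $\delta f^s=\delta f-\delta pd\,v=0$, while $\tilde f\in\mathrm{range}\,P$ by construction. To solve the displayed equation, recall that $\delta pd$ acting on $C^\infty(\mathrm{Ker}^{m-1}j)$ is elliptic (the statement quoted from \cite{DS} after (\ref{2.0}), with $m$ replaced by $m-1$) and formally self-adjoint, so on the compact torus the Fredholm alternative produces a smooth solution precisely when $\delta f$ is $L^2$-orthogonal to $\ker(\delta pd)$. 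Now $\ker(\delta pd)=\ker P$, by the identity $(\delta pd\,v,v)_{L^2}=-(pd\,v,pd\,v)_{L^2}$ (the same computation as after (\ref{2.0})). Finally, for any $w\in C^\infty(\mathrm{Ker}^{m-1}j)$ with $pd\,w=0$ one has $(\delta f,w)_{L^2}=-(f,pd\,w)_{L^2}=0$, so the orthogonality condition holds and the required $v$ exists. This completes the proof.

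I expect no real obstacle beyond bookkeeping: almost everything is formal manipulation with the adjointness relation $(pd\,\cdot\,,\,\cdot\,)_{L^2}=-(\,\cdot\,,\delta\,\cdot\,)_{L^2}$ between the bundles $\mathrm{Ker}^{m-1}j$ and $\mathrm{Ker}^{m}j$. The one step worth stating carefully is the standard but non-elementary input invoked above: an elliptic formally self-adjoint operator on the closed manifold $\mathbb{T}^2$ has closed range on smooth sections and $C^\infty=\ker\oplus\mathrm{range}$, which is exactly what licenses the solvability criterion for $\delta pd\,v=\delta f$.
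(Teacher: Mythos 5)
Your proof is correct, and it differs from the paper's in the key verification step. Both arguments reduce the problem to solving $\delta pd\,v=\delta f$ (equation \eqref{3.5}) and both rely on the ellipticity and formal self-adjointness of $\delta pd$; but where you check solvability abstractly, by noting that the Fredholm condition is $L^2$-orthogonality of $\delta f$ to $\ker(\delta pd)=\ker(pd)$ and that this orthogonality is automatic from the adjointness relation $(pd\,w,f)_{L^2}=-(w,\delta f)_{L^2}$, the paper instead invokes its Theorem \ref{Th3.1}, which characterizes the range of $\delta pd$ on the torus as the fields whose components have zero mean in global isothermal coordinates, and then verifies this zero-mean property of $\delta f$ by an explicit computation with the Christoffel symbols, arriving at the formula \eqref{3.7'} for the divergence of a trace-free field. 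Your route is cleaner and more general: it never uses global isothermal coordinates or the torus at all, so it proves the decomposition on any closed Riemannian manifold, and it also supplies the uniqueness argument (orthogonality of $\mathrm{range}(pd)$ and $\ker\delta$) which the paper leaves implicit. What the paper's coordinate computation buys is the explicit formula \eqref{3.7'}, which is not a throwaway: it is reused in the proof of Theorem \ref{Th3.2} and again in Section 4, so the concrete calculation carries additional weight in the paper's overall development even though, for the corollary itself, your formal argument suffices.
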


The summands of representation (\ref{3.4}) will be called the $j$-{\it potential} and $j$-{\it solenoidal} parts of the field $f$ respectively.

\begin{proof}
We are looking for fields $v\in C^\infty({\mbox{\rm Ker}}^{m-1}j)$ and $f^s\in C^\infty({\mbox{\rm Ker}}^{m}j)$ satisfying the system
$$
pdv+f^s=f,\quad \delta f^s=0.
$$
Applying the operator $\delta$ to the first of these equations, we obtain
\begin{equation}
\delta pd v=\delta f.
                         \label{3.5}
\end{equation}
Conversely, if $\delta f$ belongs to the range of the operator $\delta pd$, then equation (\ref{3.5}) is solvable and we define $f^s$ by the equality $f^s=f-pdv$.

Thus, all we need is to check that, for $f\in C^\infty({\mbox{\rm Ker}}^{m}j)$, every component of the field $\delta f$ with respect to global isothermal coordinates has the zero mean value. By (\ref{2.10}), it suffices to perform the check for the components $(\delta f)_{1\dots1}$ and $(\delta f)_{1\dots12}$ only. We calculate the divergence using the relation
$f_{1\dots122}=-f_{1\dots1}$.
\begin{equation}
\begin{aligned}
(\delta f)_{1\dots1}&=g^{pq}{\nabla}_{\!p}f_{1\dots1q}=e^{-2\mu}\big({\nabla}_{\!1}f_{1\dots1}+{\nabla}_{\!2}f_{1\dots12}\big),\\
(\delta f)_{1\dots12}&=g^{pq}{\nabla}_{\!p}f_{1\dots12q}=e^{-2\mu}\big({\nabla}_{\!1}f_{1\dots12}+{\nabla}_{\!2}f_{1\dots122}\big)
=e^{-2\mu}\big(-{\nabla}_{\!2}f_{1\dots1}+{\nabla}_{\!1}f_{1\dots12}\big).
\end{aligned}
                         \label{3.6}
\end{equation}
The Christoffel symbols of the metric $e^{2\mu}(dx^2+dy^2)$ are expressed by
\begin{equation}
\begin{aligned}
&\Gamma^1_{11}=\mu_x,\quad\Gamma^1_{12}=\mu_y,\quad\Gamma^1_{22}=-\mu_x,\\
&\Gamma^2_{11}=-\mu_y,\quad\Gamma^2_{12}=\mu_x,\quad\Gamma^2_{22}=\mu_y.
\end{aligned}
                         \label{3.7}
\end{equation}
Using these equalities and the definition of the covariant derivative, we compute
$$
{\nabla}_{\!1}f_{1\dots1}=\frac{\partial f_{1\dots1}}{\partial x}-m\Gamma^p_{11}f_{1\dots1p}=\frac{\partial f_{1\dots1}}{\partial x}-m\mu_xf_{1\dots1}+m\mu_yf_{1\dots12}.
$$
Quite similarly
$$
\begin{aligned}
{\nabla}_{\!2}f_{1\dots1}&=\frac{\partial f_{1\dots1}}{\partial y}-m\mu_yf_{1\dots1}-m\mu_xf_{1\dots12},\\
{\nabla}_{\!1}f_{1\dots12}&=\frac{\partial f_{1\dots12}}{\partial x}-m\mu_yf_{1\dots1}-m\mu_xf_{1\dots12},\\
{\nabla}_{\!2}f_{1\dots12}&=\frac{\partial f_{1\dots12}}{\partial y}+m\mu_xf_{1\dots1}-m\mu_yf_{1\dots12}.
\end{aligned}
$$
Substituting these values into (\ref{3.6}), we arrive to the unexpectedly simple formulas
\begin{equation}
(\delta f)_{1\dots1}=e^{-2\mu}\Big(\frac{\partial f_{1\dots1}}{\partial x}+\frac{\partial f_{1\dots12}}{\partial y}\Big),\quad
(\delta f)_{1\dots12}=e^{-2\mu}\Big(-\frac{\partial f_{1\dots1}}{\partial y}+\frac{\partial f_{1\dots12}}{\partial x}\Big).
                         \label{3.7'}
\end{equation}
Of course, this fact should have an invariant explanation independent of coordinate calculations (I even have a guess on such an explanation but do not discuss it here). Somehow or other, these formulas imply: the expressions
$$
(\delta f)_{1\dots1}\,d\sigma=\Big(\frac{\partial f_{1\dots1}}{\partial x}+\frac{\partial f_{1\dots12}}{\partial y}\Big)dxdy,\quad
(\delta f)_{1\dots12}\,d\sigma=\Big(-\frac{\partial f_{1\dots1}}{\partial y}+\frac{\partial f_{1\dots12}}{\partial x}\Big)dxdy
$$
integrate to zero over the torus.
\end{proof}

For every integer $m\geq0$ and for every constant pseudovector $c=(c^1,c^2)$ of weight $m+1$, we introduce the tensor field $Z^{m,c}\in C^\infty({\mbox{\rm Ker}}^mj)$ on a two-dimensional Riemannian torus by setting in global isothermal coordinates
\begin{equation}
Z^{m,c}_{1\dots1}=e^{2m\mu}(c^1\mu_x+c^2\mu_y),\quad Z^{m,c}_{1\dots12}=e^{2m\mu}(c^2\mu_x-c^1\mu_y).
                         \label{3.8}
\end{equation}
Other components of the field are determined by (\ref{2.10}), where the letter $f$ should be replaced with $Z^{m,c}$. This is a correct definition, i.e., the field components are transformed in a proper way under a change of global isothermal coordinates, as one can easily check on using that $c$ is a pseudovector of weight $m+1$. In the notation $Z^{m,c}$, the first index is the rank of the field and the second index reminds the dependence  on a pseudovector $c$ of weight $m+1$. The case of $m=0$ should be specially mentioned: $Z^{0,c}=c^1\mu_x+c^2\mu_y=d\mu(c)$ is an invariant function on the torus.

\begin{theorem} \label{Th3.2}
If a Riemannian torus $({\mathbb T}^2,g)$ admits a real irreducible rank $m\geq1$ Killing tensor field, then $Z^{m-1,c}$ is a potential tensor field for some constant real pseudovector $c\neq0$ of weight $m$. Conversely, assume a Riemannian torus $({\mathbb T}^2,g)$ do not admit irreducible Killing tensor fields of ranks $1,\dots, m-1$ for some $m\geq1$  (the condition is absent in the case of $m=1$). If the tensor field $Z^{m-1,c}$ is potential for some $c\neq0$, then there exists a rank $m$ irreducible Killing tensor field on the torus.
\end{theorem}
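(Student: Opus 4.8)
The plan is to reduce the whole statement to Theorem~\ref{Th2.2} and Theorem~\ref{Th3.1} by means of a single computation. I claim that if a field $h\in C^\infty({\mbox{\rm Ker}}^mj)$ lies in the kernel of $pd$, so that by Theorem~\ref{Th3.1} it has, in a global isothermal coordinate system, the components $h_{1\dots1}=e^{2m\mu}c^1$ and $h_{1\dots12}=e^{2m\mu}c^2$ for a constant pseudovector $c=(c^1,c^2)$ of weight $m$, then
\[
\delta h=2m\,Z^{m-1,c}.
\]
I would prove this by substituting these components into the simplified divergence formulas (\ref{3.7'}): since $\partial_x(e^{2m\mu}c^1)=2m\,e^{2m\mu}\mu_xc^1$ and similarly for the other derivatives, (\ref{3.7'}) gives $(\delta h)_{1\dots1}=2m\,e^{2(m-1)\mu}(c^1\mu_x+c^2\mu_y)$ and $(\delta h)_{1\dots12}=2m\,e^{2(m-1)\mu}(c^2\mu_x-c^1\mu_y)$, which is exactly $2m$ times (\ref{3.8}); the remaining components of both sides agree because, being trace free, both are determined by their first two components through the rank $m-1$ version of (\ref{2.10}), and $\delta h$ is trace free since $\delta$ commutes with $j$. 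The weight bookkeeping is consistent because, by convention, $Z^{m-1,c}$ is attached to a pseudovector of weight $(m-1)+1=m$, which matches the weight produced by Theorem~\ref{Th3.1}. This identity is the only genuinely new ingredient; for $m=1$ it degenerates to $\delta h=2Z^{0,c}$, and ``$Z^{0,c}$ is potential'' is to be read as $Z^{0,c}=0$ (there is no bundle $S^{-1}$).

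Granting the identity, the direct implication runs as follows. Let $f$ be a real irreducible rank $m\geq1$ Killing tensor field. First I would observe $pf\not\equiv0$: if $pf=0$, then the first assertion of Theorem~\ref{Th2.2} gives $f=iv$ with $v$ a rank $m-2$ Killing field, and since $iv=gv$ this makes $f$ either zero, or (for $m=2$ and $v$ a constant) a constant multiple of $g$, or a product of Killing fields of positive ranks --- each case contradicting irreducibility; for $m=1$ the observation is trivial since ${\mbox{\rm Ker}}^1j=S^1$. Next, since $f$ is Killing, its higher harmonic $pf$ satisfies $pd(pf)=0$; this is equation (\ref{2.3}) (respectively (\ref{2.3'})) taken with the largest index and the next harmonic vanishing. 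So Theorem~\ref{Th3.1} applies, $pf$ has the form (\ref{3.2}) with a constant real pseudovector $c$ of weight $m$, and $c\neq0$ because $pf\not\equiv0$ while $e^{2m\mu}$ never vanishes. By Lemma~\ref{L2.3}, $\delta(pf)$ is a potential field, and by the identity $\delta(pf)=2m\,Z^{m-1,c}$, whence $Z^{m-1,c}$ is potential.

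For the converse, assume $({\mathbb T}^2,g)$ admits no irreducible Killing tensor fields of ranks $1,\dots,m-1$ (the hypothesis being vacuous for $m=1$) and that $Z^{m-1,c}$ is potential for some constant real pseudovector $c\neq0$ of weight $m$; the potential may be taken real since $d$ has real coefficients and $Z^{m-1,c}$ is real. Define $h\in C^\infty({\mbox{\rm Ker}}^mj)$ by (\ref{3.2}) with this $c$. By Theorem~\ref{Th3.1} we have $pdh=0$, and by the identity $\delta h=2m\,Z^{m-1,c}$ is potential; thus $h$ satisfies the two conditions of Theorem~\ref{Th2.2}, which produces a rank $m$ Killing tensor field $f$ with $pf=h$. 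Chasing reality through the reconstruction in the proof of Theorem~\ref{Th2.2} --- the harmonic decomposition of the real potential and the assignments $f^k=v^k$, $f^{m-1}=-\frac{2m}{n+4m-2}v^{m-1}$ involve only real operations once the potential and $\mu$ are real --- shows $f$ may be taken real. Since $c\neq0$ we have $pf=h\not\equiv0$, so $f$ is irreducible by Lemma~\ref{L2.1}; for $m=1$ any nonzero Killing $1$-form is irreducible directly.

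The points requiring care, rather than a serious obstacle, are: pinning down the constant $2m$ in $\delta h=2m\,Z^{m-1,c}$ and checking the identity inside the trace-free bundle; the short case analysis by which irreducibility forces $pf\not\equiv0$, which must be matched against the precise definition of irreducibility (in particular the exclusion of $cg$); and the bookkeeping guaranteeing that the Killing field obtained in the converse is real. The substance of the theorem is carried by Theorems~\ref{Th2.2} and~\ref{Th3.1} together with this one divergence computation.
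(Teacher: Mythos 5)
Your proposal is correct and follows essentially the same route as the paper: the necessity is Lemma \ref{L2.3}/Theorem \ref{Th2.2} plus Theorem \ref{Th3.1} combined with the divergence computation $\delta(pf)=2m\,Z^{m-1,c}$ via (\ref{3.7'}), and the sufficiency defines $h$ by (\ref{3.2}), notes $\delta h=2mZ^{m-1,c}$ is potential, and invokes Theorem \ref{Th2.2} and Lemma \ref{L2.1}. Your extra remarks on reality, the $m=1$ degeneration, and why irreducibility forces $pf\not\equiv0$ are harmless elaborations of points the paper treats briefly.
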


\begin{proof}
Necessity. Let $f$ be a real irreducible rank $m$ Killing tensor field and let $pf$ be its higher harmonic. The field $pf$ is not identically equal to zero, otherwise $f$ would be reducible. By Theorem \ref{Th2.2}, $pf$ belongs to the kernel of $\delta pd$ and has a potential divergence. Applying Theorem \ref{Th3.1}, we obtain in global isothermal coordinates
\begin{equation}
(pf)_{1\dots1}=e^{2m\mu}c^1,\quad (pf)_{1\dots12}=e^{2m\mu}c^2
                         \label{3.9}
\end{equation}
 for some constant real pseudovector $c\neq0$ of weight $m$. We already calculated the divergence of an arbitrary field $pf\in C^\infty({\mbox{\rm Ker}}^mj)$ in the proof of Corollary \ref{C3.1}, namely
$$
(\delta pf)_{1\dots1}=e^{-2\mu}\Big(\frac{\partial (pf)_{1\dots1}}{\partial x}+\frac{\partial (pf)_{1\dots12}}{\partial y}\Big),\quad
(\delta pf)_{1\dots12}=e^{-2\mu}\Big(-\frac{\partial (pf)_{1\dots1}}{\partial y}+\frac{\partial (pf)_{1\dots12}}{\partial x}\Big).
$$
Substituting values (\ref{3.9}) for coordinates of $pf$, we obtain
\begin{equation}
(\delta pf)_{1\dots1}=2me^{2(m-1)\mu}(c^1\mu_x+c^2\mu_y),\quad
(\delta pf)_{1\dots12}=2me^{2(m-1)\mu}(c^2\mu_x-c^1\mu_y).
                         \label{3.9'}
\end{equation}
Comparing this with definition (\ref{3.8}) of $Z^{m,c}$, we see $Z^{m-1,c}=\frac{1}{2m}\delta(pf)$ is a potential tensor field.

Sufficiency.
Let $Z^{m-1,c}$ be a potential field for some $c\neq0$. Define the new tensor field $h\in C^\infty({\mbox{\rm Ker}}^mj)$ by setting in global isothermal coordinates
$$
h_{1\dots1}=e^{2m\mu}c^1,\quad h_{1\dots12}=e^{2m\mu}c^2.
$$
By Theorem \ref{Th3.1}, $h$ belongs to the kernel of the operator $\delta pd$. Besides this, the field has the potential divergence since $\delta h=2mZ^{m-1,c}$. Applying Theorem \ref{Th2.2}, we find a Killing tensor field $f$ whose higher harmonic is $h$. By Lemma \ref{L2.1}, $f$ is an irreducible Killing field.
\end{proof}

Theorem \ref{Th3.2} actually reduces the problem of finding rank $m$ Killing tensor fields on a two-dimensional Riemannian torus to the following question: for which constant pseudovectors $c$ of weight $m-1$ is the equation
\begin{equation}
dv=Z^{m-1,c}
                         \label{3.10'}
\end{equation}
solvable? At first sight, this equation is not easier than the initial equation (\ref{df=0}). However, let us observe that the order of equation (\ref{3.10'}) is much less than the order of (\ref{df=0}).
Indeed, being written in coordinates, (\ref{df=0}) is a system of ${n+m}\choose {m+1}$ linear first order differential equations in coordinates of the field $f$. But the corresponding system for (\ref{3.10'}) consists of ${n+m-2}\choose {m-1}$ equations, although these are inhomogeneous equations.

In the case of $m=1$, the potentiality of $Z^{0,c}$ means that $Z^{0,c}=c^1\mu_x+c^2\mu_y=0$ (a rank 0 potential tensor field is the function identically equal to zero). By an appropriate change of global isothermal coordinates, we can achieve $c=(1,0)$ and the previous equation becomes: $\mu_x=0$. Thus, in the case of $m=1$, Theorem \ref{Th3.2} is equivalent to the classical result: if a two-dimensional Riemannian torus admits a nontrivial Killing vector field, then $\mu=\mu(y)$ in some global isothermal coordinate system.

In the case of $m=2$, the potentiality of $Z^{1,c}$ means the existence of a function $v\in C^\infty({\mathbb T}^2)$ such that
$$
v_x=e^{2\mu}(c^1\mu_x+c^2\mu_y),\quad v_y=e^{2\mu}(c^2\mu_x-c^1\mu_y).
$$
We again change isothermal coordinates so that $c=(1,0)$ and obtain
$$
v_x=e^{2\mu}\mu_x,\quad v_y=-e^{2\mu}\mu_y.
$$
Eliminating the function $v$ from the system, we arrive to the equation $\partial^2e^{2\mu}/\partial x\partial y=0$.
 Thus, in the case of $m=2$, Theorem \ref{Th3.2} is equivalent to the classical result: if a Riemannian torus $({\mathbb T}^2,g)$ admits a rank 2 irreducible Killing tensor field, then the metric has the form $g=(a(x)+b(y))(dx^2+dy^2)$ in an appropriate global isothermal coordinate system.

\begin{corollary} \label{C3.2}
If a Riemannian torus $({\mathbb T}^2,g)$ admits a real irreducible rank $m+1\geq1$ Killing tensor field, then for some real constant pseudovector $c\neq0$ of weight $m+1$, the equality
\begin{equation}
\oint\limits_\gamma e^{m\mu}\big((c^1\mu_x+c^2\mu_y)\cos m\varphi+(c^2\mu_x-c^1\mu_y)\sin m\varphi\big)\,dt=0
                         \label{3.10}
\end{equation}
holds for every closed geodesic $\gamma$, where $\varphi=\varphi(t)$ is the angle between the geodesic and the coordinate line $y=\mbox{\rm const}$ of a global isothermal coordinate system.
\end{corollary}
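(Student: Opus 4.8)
The plan is to deduce the statement from the necessity part of Theorem~\ref{Th3.2} (applied with rank $m+1$) together with the correspondence between the inner derivative $d$ and the geodesic vector field $H$, evaluated along closed geodesics.

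First I would apply Theorem~\ref{Th3.2}: since $({\mathbb T}^2,g)$ carries a real irreducible rank $m+1$ Killing tensor field, the field $Z^{m,c}$ is potential for some real constant pseudovector $c\neq0$ of weight $m+1$, say $Z^{m,c}=dv$ with $v\in C^\infty(S^m)$. Passing to the associated functions on $TM$ and using the identity $HV=(dv)_{i_1\dots i_{m+1}}\xi^{i_1}\dots\xi^{i_{m+1}}$ recalled in the Introduction, we get that the polynomial $\mathcal Z\in C^\infty(TM)$ corresponding to $Z^{m,c}$ equals $HV$; in particular this equality holds on $\Omega M$.

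Next I would integrate along a closed geodesic. Let $\gamma$ be a closed geodesic parametrized by arc length with period $T>0$; its lift $t\mapsto\big(\gamma(t),\dot\gamma(t)\big)$ is a $T$-periodic orbit of the geodesic flow on $\Omega M$, along which $H$ acts as $\frac{d}{dt}$. Hence
$$
\oint_\gamma\mathcal Z\big(\gamma(t),\dot\gamma(t)\big)\,dt=\int_0^T\frac{d}{dt}\,V\big(\gamma(t),\dot\gamma(t)\big)\,dt=V\big(\gamma(T),\dot\gamma(T)\big)-V\big(\gamma(0),\dot\gamma(0)\big)=0 .
$$
It then remains to rewrite $\mathcal Z$ in isothermal coordinates. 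Since $Z^{m,c}\in C^\infty({\mbox{\rm Ker}}^mj)$ by its definition via (\ref{2.10}), formula (\ref{2.11}) applies with $f=Z^{m,c}$; substituting the components (\ref{3.8}) gives
$$
\mathcal Z(x,y,\theta)=e^{m\mu}\big((c^1\mu_x+c^2\mu_y)\cos m\theta+(c^2\mu_x-c^1\mu_y)\sin m\theta\big).
$$
For the unit-speed lift one has $\dot x=e^{-\mu}\cos\varphi,\ \dot y=e^{-\mu}\sin\varphi$, so the coordinate $\theta$ along the orbit is precisely the angle $\varphi$ between $\gamma$ and the line $y=\mbox{const}$, and the last two displays combine into (\ref{3.10}).

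I do not expect a genuine obstacle: the corollary is essentially the assertion that potentiality of $Z^{m,c}$ forces its associated function to integrate to zero over every periodic orbit of the geodesic flow. The only points needing a line of care are the identification of the coordinate $\theta$ in (\ref{2.11}) with the geodesic angle $\varphi$, which is immediate from the definition of coordinates on $\Omega M$, and the membership $Z^{m,c}\in C^\infty({\mbox{\rm Ker}}^mj)$, which is built into its definition.
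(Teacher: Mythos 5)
Your proposal is correct and takes essentially the same route as the paper: Theorem~\ref{Th3.2} yields potentiality of $Z^{m,c}$, its integrals over closed geodesics vanish, and the integrand is rewritten in global isothermal coordinates to give (\ref{3.10}). The only (harmless) differences are that you re-prove the vanishing of geodesic integrals of a potential field directly via $\mathcal Z=HV$ and periodicity of the lifted orbit, where the paper simply cites the ray-transform fact from \cite{mb}, and you invoke (\ref{2.11}) with $f=Z^{m,c}$ where the paper re-derives that identity by the binomial expansion and the trace-free relations.
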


\begin{proof}
As known \cite{mb}, every potential tensor field belongs to the kernel of the ray transform, i.e., integrates to zero over every closed geodesic. In our case, $Z^{m,c}$ is a potential field by Theorem \ref{Th3.2} and hence
\begin{equation}
\oint\limits_\gamma Z^{m,c}_{i_1\dots i_m}\dot\gamma{}^{i_1}\dots\dot\gamma{}^{i_m}\,dt=0
                         \label{3.11}
\end{equation}
for every closed geodesic $\gamma$. If $\gamma(t)=\big(x(t),y(t)\big)$, then
$$
Z^{m,c}_{i_1\dots i_m}\dot\gamma{}^{i_1}\dots\dot\gamma{}^{i_m}=\sum\limits_{k\geq0}{m\choose k}Z^{m,c}_{1\dots1{\underbrace{\scriptstyle 2\dots2}_k}}\dot x{}^{m-k}\dot y{}^{k}.
$$
We assume the binomial coefficient ${m\choose k}=\frac{m!}{k!(m-k)!}$ to be equal to zero for $k>m$, this allows us do not designate the upper summation limit on the right-hand side. Separating summands corresponding to even and odd $k$, we get
$$
Z^{m,c}_{i_1\dots i_m}\dot\gamma{}^{i_1}\dots\dot\gamma{}^{i_m}=\sum\limits_{k\geq0}{m\choose {2k}}Z^{m,c}_{1\dots1{\underbrace{\scriptstyle 2\dots2}_{2k}}}\dot x{}^{m-2k}\dot y{}^{2k}+
\sum\limits_{k\geq0}{m\choose {2k+1}}Z^{m,c}_{1\dots1{\underbrace{\scriptstyle 2\dots2}_{2k+1}}}\dot x{}^{m-2k-1}\dot y{}^{2k+1}.
$$
In view of  $jZ^{m,c}=0$, the equalities
$$
Z^{m,c}_{1\dots1{\underbrace{\scriptstyle 2\dots2}_{2k}}}=(-1)^kZ^{m,c}_{1\dots1},\quad Z^{m,c}_{1\dots1{\underbrace{\scriptstyle 2\dots2}_{2k+1}}}=(-1)^kZ^{m,c}_{1\dots12}
$$
hold. We use them to transform the previous formula to the form
$$
Z^{m,c}_{i_1\dots i_m}\dot\gamma{}^{i_1}\dots\dot\gamma{}^{i_m}=Z^{m,c}_{1\dots1}\sum\limits_{k\geq0}(-1)^k{m\choose {2k}}\dot x{}^{m-2k}\dot y{}^{2k}+
Z^{m,c}_{1\dots12}\sum\limits_{k\geq0}(-1)^k{m\choose {2k+1}}\dot x{}^{m-2k-1}\dot y{}^{2k+1}.
$$
Without lost of generality, we can assume $\|\dot\gamma\|^2=e^{2\mu}(\dot x{}^2+\dot y{}^2)=1$, i.e., that $\dot x=e^{-\mu}\cos\varphi,\ \dot y=e^{-\mu}\sin\varphi$ and the last formula becomes
$$
Z^{m,c}_{i_1\dots i_m}\dot\gamma{}^{i_1}\dots\dot\gamma{}^{i_m}=e^{-m\mu}\Big(Z^{m,c}_{1\dots1}\cos m\varphi+Z^{m,c}_{1\dots12}\sin m\varphi\Big).
$$
Substituting values (\ref{3.8}) of components of $Z^{m,c}$, we obtain
$$
Z^{m,c}_{i_1\dots i_m}\dot\gamma{}^{i_1}\dots\dot\gamma{}^{i_m}=e^{m\mu}\Big((c^1\mu_x+c^2\mu_y)\cos m\varphi+(c^2\mu_x-c^1\mu_y)\sin m\varphi\Big).
$$
Finally, inserting this expression into (\ref{3.11}), we arrive to (\ref{3.10}).
\end{proof}

After rewriting (\ref{3.10}) in the form
$$
c^1\oint\limits_\gamma e^{m\mu}(\mu_x\cos m\varphi-\mu_y\sin m\varphi)\,dt+c^2\oint\limits_\gamma e^{m\mu}(\mu_y\cos m\varphi+\mu_x\sin m\varphi)\,dt=0,
$$
we see the ratio
\begin{equation}
\frac{\oint_\gamma e^{m\mu}(\mu_x\cos m\varphi-\mu_y\sin m\varphi)\,dt}{\oint_\gamma e^{m\mu}(\mu_y\cos m\varphi+\mu_x\sin m\varphi)\,dt}
                         \label{3.12}
\end{equation}
is independent of $\gamma$. Thus, if we succeeded in finding two closed geodesics such that the ratio (\ref{3.12}) took different values for them, then the Riemannian torus would not admit a rank $m+1$ irreducible Killing tensor field.

\section{A rank 3 Killing tensor field on the two-dimensional torus}

Let $({\mathbb T}^2,g)=\big({\mathbb R}^2/\Gamma,e^{2\mu}(dx^2+dy^2)\big)$ be a Riemannian torus. Given a constant pseudovector $c=(c^1,c^2)$ of weight 3, we introduce the tensor field
$T^c\in C^\infty(\mbox{Ker}^2j)$ by setting in global isothermal coordinates
\begin{equation}
T^c_{11}=-T^c_{22}=e^{4\mu}(-c^2\mu_x+c^1\mu_y),\quad T^c_{12}=e^{4\mu}(c^1\mu_x+c^2\mu_y).
                         \label{4.0}
\end{equation}
Up to notations, this coincides with (\ref{3.8}) in the case of $m=2$. Indeed,
$$
T^c=Z^{2,c^\bot},\quad\mbox{where}\quad c^\bot=(-c^2,c^1).
$$
If $c$ is a constant pseudovector of weight 3, then $c^\bot$ is also a constant pseudovector of weight 3.

Let $f\in C^\infty(S^3)$ be a rank 3 real irreducible tensor field on the torus $({\mathbb T}^2,g)$. Equations (\ref{2.3'}) look as follows in this case:
\begin{equation}
\delta f^0=0,\quad pdf^0+\frac{1}{2}\delta f^1=0,\quad pdf^1=0.
                         \label{4.1}
\end{equation}
As we know (Theorem \ref{Th3.1}), the last equation of the system means that in global isothermal coordinates
$$
f^1_{111}=-f^1_{122}=\frac{1}{3}c^1e^{6\mu},\quad f^1_{112}=-f^1_{222}=\frac{1}{3}c^2e^{6\mu}
$$
for some real constant pseudovector $0\neq c=(c^1,c^2)$ of weight 3. The coefficient $1/3$ is included here to simplify further formulas. We have already calculated the divergence of this tensor field (formulas (\ref{3.9'})): $\delta f^1=2Z^{2,c}$. In this way system (\ref{4.1}) is reduced to the following one:
\begin{equation}
\delta f^0=0,\quad pdf^0=-Z^{2,c}.
                         \label{4.2}
\end{equation}

Now, we calculate the divergence of the covector field $f^0$ by standard rules
$$
\delta f^0=g^{pq}{\nabla}_{\!p}f^0_q=e^{-2\mu}\Big(\frac{\partial f^0_1}{\partial x}+\frac{\partial f^0_2}{\partial y}\Big).
$$
Therefore the first equation of system (\ref{4.2}) looks in global isothermal coordinates as follows:
$$
\frac{\partial f^0_1}{\partial x}+\frac{\partial f^0_2}{\partial y}=0.
$$
We satisfy this equation by setting
\begin{equation}
f^0_1=-{\nabla}_{\!2}u=-u_y,\quad f^0_2={\nabla}_{\!1}u=u_x,
                         \label{4.3}
\end{equation}
where $u(x,y)$ is a smooth real function on the plane whose partial derivatives $u_x$ and $u_y$ are $\Gamma$-periodic. The function $u$ is determined by the field $f^0$ uniquely up to an additive constant.

The definition of the operator $d$ gives with the help of (\ref{4.3})
$$
(df^0)_{11}=-{\nabla}_{\!1}{\nabla}_{\!2}u,\quad
(df^0)_{12}=\frac{1}{2}({\nabla}_{\!1}{\nabla}_{\!1}u-{\nabla}_{\!1}{\nabla}_{\!1}u),\quad (df^0)_{22}={\nabla}_{\!1}{\nabla}_{\!2}u.
$$
Observe $df^0$ turns out to be a trace free field: $j(df^0)=e^{-2\mu}\big((df^0)_{11}+(df^0)_{22}\big)=0$. Therefore $pdf^0=df^0$. Thus, the second of equations (\ref{4.2}) is written in isothermal coordinates as the system
$$
{\nabla}_{\!1}{\nabla}_{\!2}u=Z^{2,c}_{11},\quad
\frac{1}{2}\big({\nabla}_{\!1}{\nabla}_{\!1}u-{\nabla}_{\!2}{\nabla}_{\!2}u\big)=-Z^{2,c}_{12}.
$$
Comparing (\ref{3.8}) and (\ref{4.0}), we see that $Z^{2,c}_{11}=T^c_{12},\ Z^{2,c}_{12}=-T^c_{11}$. Therefore the previous system can be rewritten in the form
\begin{equation}
\frac{1}{2}\big({\nabla}_{\!1}{\nabla}_{\!1}u-{\nabla}_{\!2}{\nabla}_{\!2}u\big)=T^c_{11},\quad
{\nabla}_{\!1}{\nabla}_{\!2}u=T^c_{12}.
                         \label{4.4}
\end{equation}

Let us write system (\ref{4.4}) in an invariant form. To this end first of all we observe that the Hessian ${\nabla}{\nabla}u=({\nabla}_{\!i}{\nabla}_{\!j}u)$ of the function $u$ is a well defined symmetric tensor field on the torus since partial derivative of the function are $\Gamma$-periodic. The Riemannian Laplacian $\Delta u=\mbox{tr}({\nabla}{\nabla}u)=g^{ij}{\nabla}_{\!i}{\nabla}_{\!j}u$ is a well defined function on the torus. Let us now consider the trace free part of the Hessian
$$
{\nabla}{\nabla}u-\frac{1}{2}(\Delta u)g,
$$
where $g$ is the metric tensor. In isothermal coordinates
$$
\begin{aligned}
&\big({\nabla}{\nabla}u-\frac{1}{2}(\Delta u)g\big)_{11}=-\big({\nabla}{\nabla}u-\frac{1}{2}(\Delta u)g\big)_{22} =\frac{1}{2}\big({\nabla}_{\!1}{\nabla}_{\!1}u-{\nabla}_{\!2}{\nabla}_{\!2}u\big),\\
&\big({\nabla}{\nabla}u-\frac{1}{2}(\Delta u)g\big)_{12}={\nabla}_{\!1}{\nabla}_{\!2}u.
\end{aligned}
$$
Comparing these equalities with (\ref{4.4}), we see that system (\ref{4.4}) is equivalent to the equation
\begin{equation}
{\nabla}{\nabla}u-\frac{1}{2}(\Delta u)g=T^c.
                         \label{4.5}
\end{equation}

Thus, the question: ``Does there exist a Riemannian metric on the 2-torus which admits a rank 3 irreducible Killing tensor field?'' is closely related to the solvability problem  for equation (\ref{4.5}): one has to find necessary and sufficient conditions on coefficients and right-hand side of the equation (i.e. conditions on $(\Gamma,\mu,c)$) for the existence of a solution with $\Gamma$-periodic partial derivatives. The problem seems to be rather hard.
In the current section, we will obtain two necessary solvability condition for (\ref{4.5}) which are of some interest.

Let us demonstrate equation (\ref{4.5}) can be solved with respect to all third order derivatives of the function $u$. Now, we perform our calculations in arbitrary coordinates. We introduce the temporary notation $v=\frac{1}{2}\Delta u$. Differentiate (\ref{4.5}) to obtain
\begin{equation}
{\nabla}_{\!i}{\nabla}_{\!j}{\nabla}_{\!k}u=g_{jk}{\nabla}_{\!i}v+{\nabla}_{\!i}T^c_{jk}.
                         \label{4.6}
\end{equation}
By the commutator formula for covariant derivatives,
$$
\begin{aligned}
&{\nabla}_{\!1}{\nabla}_{\!1}{\nabla}_{\!2}u-{\nabla}_{\!2}{\nabla}_{\!1}{\nabla}_{\!1}u=-R^1_{\ 112}{\nabla}_{\!1}u-R^2_{\ 112}{\nabla}_{\!2}u,\\
&{\nabla}_{\!1}{\nabla}_{\!2}{\nabla}_{\!2}u-{\nabla}_{\!2}{\nabla}_{\!1}{\nabla}_{\!2}u=-R^1_{\ 212}{\nabla}_{\!1}u-R^2_{\ 212}{\nabla}_{\!2}u,
\end{aligned}
$$
where $R=(R^i_{\ jk\ell})$ is the curvature tensor.
Substituting values (\ref{4.6}) for third order derivatives into left-hand sides of these equalities, we arrive to the system
$$
\begin{aligned}
&g_{12}{\nabla}_{\!1}v-g_{11}{\nabla}_{\!2}v=-R^1_{\ 112}{\nabla}_{\!1}u-R^2_{\ 112}{\nabla}_{\!2}u+{\nabla}_{\!2}T^c_{11}-{\nabla}_{\!1}T^c_{12},\\
&g_{22}{\nabla}_{\!1}v-g_{12}{\nabla}_{\!2}v=-R^1_{\ 212}{\nabla}_{\!1}u-R^2_{\ 212}{\nabla}_{\!2}u+{\nabla}_{\!2}T^c_{12}-{\nabla}_{\!1}T^c_{22}.
\end{aligned}
$$
We solve the system and get
$$
\begin{aligned}
&{\nabla}_{\!1}v=-R^{12}_{\ \ 12}{\nabla}_{\!1}u+{\nabla}_{\!2}(T^c)^2_{1}-{\nabla}_{\!1}(T^c)^2_{2},\\
&{\nabla}_{\!2}v=-R^{12}_{\ \ 12}{\nabla}_{\!2}u-{\nabla}_{\!2}(T^c)^1_{1}+{\nabla}_{\!1}(T^c)^1_{2},
\end{aligned}
$$
where $(T^c)^i_j=g^{ip}T^c_{pj}$.
The condition $\mbox{tr}\,T^c=g^{ij}T^c_{ij}=0$ implies
$$
{\nabla}_{\!1}(T^c)^2_{2}=-{\nabla}_{\!1}(T^c)^1_{1}, \quad {\nabla}_{\!2}(T^c)^1_{1}=-{\nabla}_{\!2}(T^c)^2_{2}
$$
and two previous formulas can be written uniformly:
$$
{\nabla}_{\!i}v=-R^{12}_{\ \ 12}{\nabla}_{\!i}u+{\nabla}^pT^c_{ip},
$$
where ${\nabla}^p=g^{pq}{\nabla}_{\!q}$. Recall the formula $R_{ijk\ell}=K(g_{ik}g_{j\ell}-g_{i\ell}g_{jk})$ holds in the two-dimensional case where $K$ is the Gaussian curvature. Hence
$R^{12}_{\ \ 12}=K$ and the previous formula takes the form
$$
{\nabla}_{\!i}v=-K{\nabla}_{\!i}u+{\nabla}^pT^c_{ip}.
$$
Substituting this value into (\ref{4.6}), we obtain the final formula
\begin{equation}
{\nabla}_{\!i}{\nabla}_{\!j}{\nabla}_{\!k}u=g_{jk}(-K{\nabla}_{\!i}u+{\nabla}^pT^c_{ip})+{\nabla}_{\!i}T^c_{jk}.
                         \label{4.7}
\end{equation}

Now, we are going to derive some solvability condition for system (\ref{4.7}). In the case when third order partial derivatives stand on left-hand sides, the standard approach for deriving such solvability conditions consists of differentiating equations and using the symmetry of fourth order partial derivatives. In our case, covariant derivatives stand on left-hand sides. Therefore we need to use corresponding commutator formulas.

Differentiate (\ref{4.7}) to obtain
$$
{\nabla}_{\!i}{\nabla}_{\!j}{\nabla}_{\!k}{\nabla}_{\!\ell}u=g_{k\ell}(-K{\nabla}_{\!i}{\nabla}_{\!j}u-{\nabla}_{\!i}K\cdot{\nabla}_{\!j}u+{\nabla}_{\!i}{\nabla}^pT^c_{jp})
+{\nabla}_{\!i}{\nabla}_{\!j}T^c_{k\ell}.
$$
Then we alternate this equality in the indices $(i,j)$ and write the result as follows:
\begin{equation}
\begin{aligned}
g_{k\ell}(-{\nabla}_{\!j}K\cdot{\nabla}_{\!i}u&+{\nabla}_{\!i}K\cdot{\nabla}_{\!j}u
-{\nabla}_{\!i}{\nabla}^pT^c_{jp}+{\nabla}_{\!j}{\nabla}^pT^c_{ip})\\
&=({\nabla}_{\!i}{\nabla}_{\!j}T^c_{k\ell}-{\nabla}_{\!j}{\nabla}_{\!i}T^c_{k\ell})
-({\nabla}_{\!i}{\nabla}_{\!j}{\nabla}_{\!k}{\nabla}_{\!\ell}u-{\nabla}_{\!j}{\nabla}_{\!i}{\nabla}_{\!k}{\nabla}_{\!\ell}u).
\end{aligned}
                         \label{4.8}
\end{equation}
Let us demonstrate the right-hand side of this formula is identically equal to zero. Indeed, by the commutator formula for covariant derivatives,
\begin{equation}
{\nabla}_{\!i}{\nabla}_{\!j}T^c_{k\ell}-{\nabla}_{\!j}{\nabla}_{\!i}T^c_{k\ell}
=-R^p_{\ kij}T^c_{p\ell}-R^p_{\ \ell ij}T^c_{kp},
                         \label{4.9}
\end{equation}
$$
{\nabla}_{\!i}{\nabla}_{\!j}{\nabla}_{\!k}{\nabla}_{\!\ell}u-{\nabla}_{\!j}{\nabla}_{\!i}{\nabla}_{\!k}{\nabla}_{\!\ell}u
=-R^p_{\ kij}{\nabla}_{\!p}{\nabla}_{\!\ell}u-R^p_{\ \ell ij}{\nabla}_{\!k}{\nabla}_{\!p}u.
$$
Substituting values (\ref{4.5}) for second order derivatives of the function $u$ into the right-hand side of the last formula, we obtain
$$
{\nabla}_{\!i}{\nabla}_{\!j}{\nabla}_{\!k}{\nabla}_{\!\ell}u-{\nabla}_{\!j}{\nabla}_{\!i}{\nabla}_{\!k}{\nabla}_{\!\ell}u
=-R^p_{\ kij}T^c_{p\ell}-R^p_{\ \ell ij}T^c_{kp}-\frac{1}{2}(\Delta u)R_{\ell kij}-\frac{1}{2}(\Delta u)R_{k\ell ij}.
$$
The sum of two last terms on the right-hand side is equal to zero in view of symmetries of the curvature tensor and the formula is simplified to the following one:
$$
{\nabla}_{\!i}{\nabla}_{\!j}{\nabla}_{\!k}{\nabla}_{\!\ell}u-{\nabla}_{\!j}{\nabla}_{\!i}{\nabla}_{\!k}{\nabla}_{\!\ell}u
=-R^p_{\ kij}T^c_{p\ell}-R^p_{\ \ell ij}T^c_{kp}.
$$
From this and (\ref{4.9}), we see that the right-hand side of (\ref{4.8}) is indeed equal to zero. Now, (\ref{4.8}) takes the form
$$
-{\nabla}_{\!j}K\cdot{\nabla}_{\!i}u+{\nabla}_{\!i}K\cdot{\nabla}_{\!j}u
={\nabla}_{\!i}{\nabla}^pT^c_{jp}-{\nabla}_{\!j}{\nabla}^pT^c_{ip}.
$$
Setting $(i,j)=(1,2)$ here, we arrive to the equality
$$
-{\nabla}_{\!2}K\cdot{\nabla}_{\!1}u+{\nabla}_{\!1}K\cdot{\nabla}_{\!2}u
={\nabla}_{\!1}{\nabla}^pT^c_{2p}-{\nabla}_{\!2}{\nabla}^pT^c_{1p}
$$
that can be written in the form
\begin{equation}
\big(-{\nabla}_{\!2}K\frac{\partial}{\partial x^1}+{\nabla}_{\!1}K\frac{\partial}{\partial x^2}\Big)u
={\nabla}_{\!1}{\nabla}^pT^c_{2p}-{\nabla}_{\!2}{\nabla}^pT^c_{1p}.
                         \label{4.10}
\end{equation}

We assume the torus to be oriented and a coordinate system to be agreed with the orientation so that the shortest rotation from $\partial/\partial x^1$ to $\partial/\partial x^2$ goes in the positive direction. Let ${\nabla}^\bot K$ be the vector field obtained from the gradient ${\nabla}K$ by rotating through the right angle in the positive direction. Then
$$
{\nabla}^\bot K=(g_{11}g_{22}-g_{12}^2)^{-1/2}\Big(-{\nabla}_{\!2}K\frac{\partial}{\partial x^1}+{\nabla}_{\!1}K\frac{\partial}{\partial x^2}\Big)
$$
and equation (\ref{4.10}) takes the final form
\begin{equation}
({\nabla}^\bot K)u=\Phi^c,
                         \label{4.11}
\end{equation}
where
\begin{equation}
\Phi^c=(g_{11}g_{22}-g_{12}^2)^{-1/2}\big({\nabla}_{\!1}{\nabla}^pT^c_{2p}-{\nabla}_{\!2}{\nabla}^pT^c_{1p}\big).
                         \label{4.12}
\end{equation}
In particular, (\ref{4.11}) implies that $\Phi^c$ is a well defined smooth function on the torus. The latter fact can be proved directly by checking that the right-hand side of (\ref{4.12}) is independent of the choice of coordinates.

Substituting values (\ref{3.8}) for components of the tensor $T^c$ into (\ref{4.12}) and performing some easy calculations, we obtain the following expression for the function $\Phi^c$ in global isothermal coordinates:
\begin{equation}
\Phi^c=c^1\Lambda_1+c^2\Lambda_2,
                         \label{4.13}
\end{equation}
where
\begin{equation}
\begin{aligned}
\Lambda_1&=\mu_{xxx}-3\mu_{xyy}+10\mu_x\mu_{xx}-20\mu_y\mu_{xy}-10\mu_x\mu_{yy}+8\mu_x^3-24\mu_x\mu_y^2,\\
\Lambda_2&=3\mu_{xxy}-\mu_{yyy}+10\mu_y\mu_{xx}+20\mu_x\mu_{xy}-10\mu_y\mu_{yy}+24\mu_x^2\mu_y-8\mu_y^3.
\end{aligned}
                         \label{4.14}
\end{equation}

As we know, $c=(c^1,c^2)$ is a pseudovector of weight 3. Equality (\ref{4.13}) gives us an impetus to the suggestion: $\Lambda=(\Lambda_1,\Lambda_2)$ must be a 1-pseudoform of weight 3. This fact is not obvious from (\ref{4.14}). To clarify the situation, let us find the complex version of formulas (\ref{4.14}). Using the equalities
$$
e^{2\mu}=\lambda,\quad \partial_x=\partial_z+\partial_{\bar z},\quad \partial_y=\textsl{i}(\partial_z-\partial_{\bar z})
$$
and performing some easy calculations, we transform (\ref{4.14}) to the form
\begin{equation}
\begin{aligned}
\Lambda_1&=2\frac{\lambda_{zzz}+\lambda_{\bar z\bar z\bar z}}{\lambda}
+4\frac{\lambda_z\lambda_{zz}+\lambda_{\bar z}\lambda_{\bar z\bar z}}{\lambda^2}
-2\frac{\lambda_z^3+\lambda_{\bar z}^3}{\lambda^3},\\
\Lambda_2&=\textsl{i}\Big(2\frac{\lambda_{zzz}-\lambda_{\bar z\bar z\bar z}}{\lambda}
+4\frac{\lambda_z\lambda_{zz}-\lambda_{\bar z}\lambda_{\bar z\bar z}}{\lambda^2}
-2\frac{\lambda_z^3-\lambda_{\bar z}^3}{\lambda^3}\Big).
\end{aligned}
                         \label{4.15}
\end{equation}
An invariant nature of these formulas is now obvious which is expressed in our language by the statement: $\Lambda=(\Lambda_1,\Lambda_2)$ is a 1-pseudoform of weight 3.
The summands on right-hand sides of (\ref{4.15}) give us three examples of 1-pseudoforms of weight 3:
\begin{equation}
\begin{aligned}
\Lambda^{(1)}&=(\Lambda^{(1)}_1,\Lambda^{(1)}_2)=\frac{1}{\lambda}\big(\lambda_{zzz}+\lambda_{\bar z\bar z\bar z}, \textsl{i}(\lambda_{zzz}-\lambda_{\bar z\bar z\bar z})\big),\\
\Lambda^{(2)}&=(\Lambda^{(2)}_1,\Lambda^{(2)}_2)=\frac{1}{\lambda^2}\big(\lambda_z\lambda_{zz}+\lambda_{\bar z}\lambda_{\bar z\bar z},
 \textsl{i}(\lambda_z\lambda_{zz}-\lambda_{\bar z}\lambda_{\bar z\bar z})\big),\\
\Lambda^{(3)}&=(\Lambda^{(3)}_1,\Lambda^{(3)}_2)=\frac{1}{\lambda^3}\big(\lambda_z^3+\lambda_{\bar z}^3, \textsl{i}(\lambda_z^3-\lambda_{\bar z}^3)\big).
\end{aligned}
                         \label{4.16}
\end{equation}
We emphasize there is no ambiguity in the definition, i.e.,  these 1-pseudoforms are completely determined by the metric $g$ as well as the 1-pseudoform of weight 3 participating in (\ref{4.13})
\begin{equation}
\Lambda=2\Lambda^{(1)}+4\Lambda^{(2)}-2\Lambda^{(3)}.
                         \label{4.17}
\end{equation}

The expression on the left-hand side of (\ref{4.11}) is the derivative of the function $u$ along the isoline $\gamma$ of the function $K$ which is parameterized so that $\|\dot\gamma\|=\|{\nabla}K\|$. Integrating (\ref{4.11}) over $\gamma$, we arrive to the following statement.

\begin{theorem} \label{Th4.1}
Let $({\mathbb T}^2,g)=({\mathbb R}^2/\Gamma,\lambda)$ be a two-dimensional Riemannian torus. If the torus admits a real irreducible rank 3 Killing tensor field, then the 1-pseudoform $\Lambda$ of weight 3, which is defined by (\ref{4.16})--(\ref{4.17}), satisfies the following condition.

There exist a real constant pseudovector $c\neq 0$ of weight 3 and real function $u\in C^\infty({\mathbb R}^2)$ with $\Gamma$-periodic partial derivatives such that the following statement holds.

Let a curve $\gamma:[a,b]\rightarrow {\mathbb T}^2$ be a part of an isoline $\{K=K_0\}$ of the Gaussian curvature $K$. Assume $\gamma$ do not contain critical points of the function $K$ and to be parameterized so that $\|\dot\gamma\|=\|{\nabla}K\|$. Let $\tilde\gamma:[a,b]\rightarrow {\mathbb R}^2$ be the lift of $\gamma$ with respect to the covering
${\mathbb R}^2\rightarrow{\mathbb R}^2/\Gamma={\mathbb T}^2$. Then
\begin{equation}
\int\limits_a^b\big(c^1\Lambda_1(\gamma(t))+c^2\Lambda_2(\gamma(t)\big)\,dt=u(\tilde\gamma(b))-u(\tilde\gamma(a)).
                         \label{4.18}
\end{equation}
\end{theorem}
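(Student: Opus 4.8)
The statement is essentially a repackaging of the computation already carried out in this section, so the plan is short: from a given Killing field extract the pseudovector $c$ and the potential $u$, record that $u$ satisfies the pointwise identity (\ref{4.11}), and then integrate that identity along an isoline of $K$ by the fundamental theorem of calculus.

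First I would run through the reduction made above. Suppose $f\in C^\infty(S^3)$ is a real irreducible rank 3 Killing tensor field, with harmonics $f^0$ and $f^1=pf$ satisfying (\ref{4.1}). The higher harmonic $pf$ cannot vanish identically: otherwise, exactly as in the proof of Theorem \ref{Th3.2}, Theorem \ref{Th2.2} would force $f=gv$ with $v$ a Killing covector field, hence $f$ reducible. Applying Theorem \ref{Th3.1} to $pf$, we obtain a real constant pseudovector $c\neq0$ of weight 3 with $\delta f^1=2Z^{2,c}$, so (\ref{4.1}) reduces to (\ref{4.2}). The equation $\delta f^0=0$ together with the simple connectivity of $\mathbb{R}^2$ lets us represent $f^0$ in the form (\ref{4.3}) for a real function $u\in C^\infty(\mathbb{R}^2)$ whose partial derivatives $u_x,u_y$ are $\Gamma$-periodic (and $u$ is unique up to an additive constant). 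As shown above, $u$ then satisfies (\ref{4.5}), hence (\ref{4.7}), hence the identity $(\nabla^\bot K)u=\Phi^c$; substituting (\ref{4.13}) this reads $(\nabla^\bot K)u=c^1\Lambda_1+c^2\Lambda_2$ with $\Lambda=(\Lambda_1,\Lambda_2)$ the 1-pseudoform of weight 3 given by (\ref{4.16})--(\ref{4.17}).

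It remains to integrate this along $\gamma$. Fix a part $\gamma:[a,b]\to\mathbb{T}^2$ of an isoline $\{K=K_0\}$ containing no critical point of $K$ and parameterized so that $\|\dot\gamma\|=\|\nabla K\|$; since $\nabla^\bot K$ is tangent to the isolines, has length $\|\nabla K\|$ and is nowhere zero on $\gamma$, we may fix the orientation of $\gamma$ so that $\dot\gamma(t)=(\nabla^\bot K)(\gamma(t))$ for every $t$ --- this is the convention under which, as noted just before the theorem, $(\nabla^\bot K)u$ is the $t$-derivative of $u$ along $\gamma$. The field $\nabla^\bot K$ is $\Gamma$-periodic (being built from $K$), so it lifts to $\mathbb{R}^2$ and the lift $\tilde\gamma$ satisfies $\dot{\tilde\gamma}(t)=(\nabla^\bot K)(\tilde\gamma(t))$. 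Although $u$ itself is only \emph{quasi-periodic}, the 1-form $du=u_x\,dx+u_y\,dy$ descends to a well-defined 1-form on $\mathbb{T}^2$; hence by the chain rule
\begin{equation*}
\frac{d}{dt}\,u\big(\tilde\gamma(t)\big)=\big((\nabla^\bot K)u\big)\big(\gamma(t)\big)=c^1\Lambda_1\big(\gamma(t)\big)+c^2\Lambda_2\big(\gamma(t)\big).
\end{equation*}
Integrating from $a$ to $b$ and applying the fundamental theorem of calculus yields (\ref{4.18}).

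The genuine work --- the passage from (\ref{4.5}) to (\ref{4.11}) through the commutator formulas for covariant derivatives, the substitution of (\ref{4.5}) for the second derivatives of $u$, and the two-dimensional identity $R^{12}_{\ \ 12}=K$ --- has already been carried out above; in this proof there is no real obstacle, only the two small points of care already indicated: the use of irreducibility to guarantee $c\neq0$, and the bookkeeping of the lift $\tilde\gamma$, which cannot be avoided because $u$ need not descend to the torus.
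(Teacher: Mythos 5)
Your proposal is correct and follows essentially the same route as the paper: the reduction of the Killing equation to system (\ref{4.2}) via Theorems \ref{Th2.2} and \ref{Th3.1}, the introduction of $u$ through (\ref{4.3}), the derivation of $(\nabla^\bot K)u=c^1\Lambda_1+c^2\Lambda_2$, and then integration along the isoline with the lift $\tilde\gamma$ handling the non-periodicity of $u$. The only (harmless) deviation is normalization: with Theorem \ref{Th3.1}'s normalization one gets $\delta f^1=6Z^{2,c}$ rather than $2Z^{2,c}$, which is absorbed by rescaling the pseudovector $c$ and does not affect the conclusion.
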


Since the function $u\in C^\infty({\mathbb R}^2)$ has $\Gamma$-periodic derivatives $u_x$ and $u_y$, it can be uniquely represented in the form
\begin{equation}
u(x,y)=w(x,y)+\alpha_1x+\alpha_2y,
                         \label{4.19}
\end{equation}
where $\alpha_1,\alpha_2\in{\mathbb R}$ and $w$ is a $\Gamma$-periodic function. The expression $\alpha=\alpha_1\,dx+\alpha_2\,dy$ is a well defined closed 1-form on the torus independent of the choice of global isothermal coordinates. Let $\sigma=[\alpha]\in H^1({\mathbb T}^2,{\mathbb R})$ be the one-dimensional cohomology class defined by the form $\alpha$.

Now, we consider the case of a closed curve $\gamma:[a,b]\rightarrow{\mathbb T}^2$ participating in Theorem \ref{Th4.1}. If $\tilde\gamma(t)=\big(\tilde\gamma{}^1(t),\tilde\gamma{}^2(t)\big)$ is the lift of $\gamma$, then the vector $\tilde\gamma(b)-\tilde\gamma(a)$ belongs to the lattice $\Gamma$. Using representation (\ref{4.19}), we write the right-hand side of (\ref{4.18}) in the form
$$
u(\tilde\gamma(b))-u(\tilde\gamma(a))=\Big[w(\tilde\gamma(b))-w(\tilde\gamma(a))\Big]
+\Big[\alpha_1(\tilde\gamma{}^1(b)-\tilde\gamma{}^1(a))+\alpha_2(\tilde\gamma{}^2(b)-\tilde\gamma{}^2(a))\Big].
$$
The difference in the first brackets is equal to zero since $w$ is a $\Gamma$-periodic function. The expression in the second brackets is obviously equal to $\langle\sigma,[\gamma]\rangle$, where $[\gamma]\in H_1({\mathbb T}^2,{\mathbb R})$ is the one-dimensional homology class determined by the cycle $\gamma$ and
$$
\langle\cdot,\cdot\rangle:H^1({\mathbb T}^2,{\mathbb R})\times H_1({\mathbb T}^2,{\mathbb R})\rightarrow{\mathbb R}
$$
is the canonical paring of one-dimensional de Rham cohomologies and homologies. In this way we arrive to the following statement.

\begin{theorem} \label{Th4.2}
Let $({\mathbb T}^2,g)$ be a two-dimensional Riemannian torus. If the torus admits a real irreducible rank 3 Killing tensor field, then the 1-pseudoform $\Lambda$ of weight 3, which is defined by (\ref{4.16})--(\ref{4.17}), satisfies the following condition.

There exist a real constant pseudovector $c\neq 0$ of weight 3 and cohomology class $\sigma\in H^1({\mathbb T},{\mathbb R})$ such that the equality
\begin{equation}
\oint\limits_\gamma(c^1\Lambda_1+c^2\Lambda_2)\,dt=\langle\sigma,[\gamma]\rangle
                         \label{4.20}
\end{equation}
holds for every closed curve $\gamma:[a,b]\rightarrow {\mathbb T}^2$ which is a part of an isoline $\{K=K_0\}$, does not contain critical points of $K$, and parameterized so that $\|\dot\gamma\|=\|{\nabla}K\|$.
\end{theorem}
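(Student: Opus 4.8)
The plan is to derive Theorem \ref{Th4.2} as a specialization of Theorem \ref{Th4.1} to \emph{closed} isoline arcs, the only new ingredient being the interpretation of the right-hand side of (\ref{4.18}) in terms of the de Rham pairing. So the first step is simply to invoke Theorem \ref{Th4.1}: from the existence of a real irreducible rank $3$ Killing tensor field we obtain a real constant pseudovector $c\neq0$ of weight $3$ and a real function $u\in C^\infty({\mathbb R}^2)$ with $\Gamma$-periodic partial derivatives for which (\ref{4.18}) holds along every non-critical isoline arc parameterized by $\|\dot\gamma\|=\|{\nabla}K\|$.

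Next I would split off the linear part of $u$ as in (\ref{4.19}), $u=w+\alpha_1x+\alpha_2y$ with $w$ a $\Gamma$-periodic function and $\alpha_1,\alpha_2\in{\mathbb R}$; the point is that $\alpha=\alpha_1\,dx+\alpha_2\,dy$, being translation-invariant, is $\Gamma$-invariant and hence descends to a closed $1$-form on ${\mathbb T}^2$ whose class $\sigma=[\alpha]\in H^1({\mathbb T}^2,{\mathbb R})$ does not depend on the choice of global isothermal coordinates (here one uses that admissible coordinate changes are affine). Then I would take $\gamma:[a,b]\to{\mathbb T}^2$ to be one of the closed curves in the statement and pass to its lift $\tilde\gamma$; since $\gamma$ is closed, $\tilde\gamma(b)-\tilde\gamma(a)$ is a vector of the lattice $\Gamma$. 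Plugging $u=w+\alpha_1x+\alpha_2y$ into the right-hand side of (\ref{4.18}) gives
$$
u(\tilde\gamma(b))-u(\tilde\gamma(a))=\big[w(\tilde\gamma(b))-w(\tilde\gamma(a))\big]+\big[\alpha_1(\tilde\gamma{}^1(b)-\tilde\gamma{}^1(a))+\alpha_2(\tilde\gamma{}^2(b)-\tilde\gamma{}^2(a))\big].
$$
The first bracket vanishes because $w$ is $\Gamma$-periodic and $\tilde\gamma(b)-\tilde\gamma(a)\in\Gamma$.

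Finally I would identify the surviving bracket with the pairing $\langle\sigma,[\gamma]\rangle$: pulling $\alpha$ back to $[a,b]$ along $\tilde\gamma$ yields $\alpha_1\,d\tilde\gamma{}^1+\alpha_2\,d\tilde\gamma{}^2$, so its integral over $[a,b]$ is exactly $\alpha_1(\tilde\gamma{}^1(b)-\tilde\gamma{}^1(a))+\alpha_2(\tilde\gamma{}^2(b)-\tilde\gamma{}^2(a))$, and by definition of the de Rham pairing this equals $\langle[\alpha],[\gamma]\rangle=\langle\sigma,[\gamma]\rangle$. Combining with (\ref{4.18}) gives (\ref{4.20}). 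I do not expect a serious obstacle here — the content is essentially bookkeeping once Theorem \ref{Th4.1} is in hand; the only points that require a little care are the coordinate-independence of $\sigma$ (so that the statement is intrinsic) and the observation that $\gamma$ need only be a closed \emph{arc} of an isoline, not a full isoline component, for its class $[\gamma]\in H_1({\mathbb T}^2,{\mathbb R})$ and the above lift argument to make sense.
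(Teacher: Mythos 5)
Your proposal is correct and follows essentially the same route as the paper: the paper also deduces Theorem \ref{Th4.2} from Theorem \ref{Th4.1} by writing $u=w+\alpha_1x+\alpha_2y$ as in (\ref{4.19}), noting that for a closed curve the lift satisfies $\tilde\gamma(b)-\tilde\gamma(a)\in\Gamma$ so the $\Gamma$-periodic part $w$ drops out, and identifying the remaining linear term with $\langle\sigma,[\gamma]\rangle$ for $\sigma=[\alpha_1\,dx+\alpha_2\,dy]$. Your added remarks on coordinate-independence of $\sigma$ and on closed isoline arcs are consistent with, and slightly more explicit than, the paper's exposition.
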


Unlike Theorem \ref{Th4.1}, the function $u$ is not mentioned here. Therefore the statement of Theorem \ref{Th4.2} can be considered as a necessary condition for the solvability of equation (\ref{4.5}).

\begin{corollary} \label{C4.1}
Under hypotheses of Theorem \ref{Th4.2}, let $\gamma$ be a contractible closed curve that is a part of an isoline $\{K=K_0\}$ and let $D$ be the closed domain on the torus which is homeomorphic to a disk and is bounded by $\gamma$. Assume there is exactly one critical point of the function $K$ in $D$ and moreover the point belongs to the interior of $D$ and is a nondegenerate critical point either of index 0 or of index 2 (i.e., it is a point either of a local maximum or of a local minimum). Then
\begin{equation}
\int\limits_D(c^1\Lambda_1+c^2\Lambda_2)\,d\sigma=0,
                         \label{4.21}
\end{equation}
where $d\sigma$ is the area form.
\end{corollary}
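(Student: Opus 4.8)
The plan is to derive (\ref{4.21}) from Theorem \ref{Th4.2} and the coarea formula, exploiting the fact that \emph{every} isoline of $K$ contained in $D$ is contractible, so that the right-hand side $\langle\sigma,[\gamma_t]\rangle$ of (\ref{4.20}) vanishes along each of them (here $c$ and $\sigma$ are those furnished by Theorem \ref{Th4.2}). By (\ref{4.13}) the integrand in (\ref{4.21}) equals the globally defined smooth function $\Phi^c=c^1\Lambda_1+c^2\Lambda_2$ of (\ref{4.11})--(\ref{4.12}), so $\int_D(c^1\Lambda_1+c^2\Lambda_2)\,d\sigma=\int_D\Phi^c\,d\sigma$ is a well-defined quantity, and the goal is to show it is zero.

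First I would set up the foliation of $D\setminus\{p\}$ by isolines of $K$. Let $p$ be the critical point and $K_\ast=K(p)$; assume for definiteness that $p$ has index $2$, the index $0$ case following by replacing $K$ with $-K$ (which changes nothing in $\Phi^c$ and only relabels level sets). Since $p$ is the unique critical point of $K$ in $D$, is nondegenerate, and $K\equiv K_0$ on $\gamma=\partial D$, a short argument with the extrema of $K$ over the compact set $D$ gives $K_0<K_\ast$ and $K_0\le K\le K_\ast$ on $D$, with $K_\ast$ attained only at $p$ and $\{K=K_0\}\cap D=\partial D$. Applying Morse theory to $-K$ on the disk $D$, whose only critical point is the nondegenerate minimum $p$ in the interior, one sees that for each $t\in(K_0,K_\ast)$ the superlevel set $\{x\in D:K(x)\ge t\}$ is a smoothly embedded closed $2$-disk lying in $\mathrm{int}\,D$; hence its boundary $\gamma_t:=D\cap\{K=t\}$ is a single smooth simple closed curve which is contractible in ${\mathbb T}^2$, and the family $\{\gamma_t\}_{K_0\le t<K_\ast}$, with $\gamma_{K_0}=\gamma$, foliates $D\setminus\{p\}$.

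Next I would apply Theorem \ref{Th4.2} along each $\gamma_t$. For $t\in(K_0,K_\ast)$ the curve $\gamma_t$ contains no critical point of $K$, and parameterizing it so that $\|\dot\gamma_t\|=\|{\nabla}K\|$ it meets all hypotheses of Theorem \ref{Th4.2}; since $[\gamma_t]=0$ in $H_1({\mathbb T}^2,{\mathbb R})$, the right-hand side of (\ref{4.20}) vanishes and
$$
\oint_{\gamma_t}\Phi^c\,dt=\oint_{\gamma_t}\bigl(c^1\Lambda_1+c^2\Lambda_2\bigr)\,dt=0 .
$$
Then I would invoke the coarea formula on $(D,g)$ for the function $K$: the function $\Phi^c/\|{\nabla}K\|$ is integrable on $D$ (near the nondegenerate point $p$ one has $\|{\nabla}K\|\asymp r$ while $d\sigma\asymp r\,dr\,d\vartheta$, so the singularity is integrable), and
$$
\int_D\Phi^c\,d\sigma=\int_D\frac{\Phi^c}{\|{\nabla}K\|}\,\|{\nabla}K\|\,d\sigma=\int_{K_0}^{K_\ast}\Bigl(\int_{\gamma_t}\frac{\Phi^c}{\|{\nabla}K\|}\,ds_t\Bigr)\,dt ,
$$
where $ds_t$ is the arc-length element of $\gamma_t$. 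For $t\in(K_0,K_\ast)$ the parameterization with $\|\dot\gamma_t\|=\|{\nabla}K\|$ gives $ds_t=\|{\nabla}K\|\,dt$ along $\gamma_t$, so the inner integral equals $\oint_{\gamma_t}\Phi^c\,dt$, which is $0$ by the previous display. The two exceptional values $t=K_0,K_\ast$ form a null set, so $\int_D\Phi^c\,d\sigma=0$, i.e.\ (\ref{4.21}).

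The verifications that $\Phi^c$ is globally smooth, that $K_0<K_\ast$, and that the coarea identity holds are routine. The one step that genuinely uses the hypotheses is the foliation statement: if $p$ were a saddle (index $1$), or if $D$ contained a second critical point, some $\gamma_t$ could be disconnected or could carry a nonzero class in $H_1({\mathbb T}^2,{\mathbb R})$, so $\langle\sigma,[\gamma_t]\rangle$ need not vanish and the argument breaks down. Producing the ``pencil of contractible isolines centered at $p$'' via the Morse-theoretic gradient-flow argument is therefore the crux; after that, the coarea computation is standard.
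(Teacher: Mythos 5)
Your argument is correct and is essentially the paper's own proof: the paper disposes of Corollary \ref{C4.1} with the single observation that, for the parametrization $\|\dot\gamma\|=\|\nabla K\|$, one has $dt\wedge dK=\pm\,d\sigma$, which is exactly the coarea identity you use, combined with the vanishing of $\langle\sigma,[\gamma_t]\rangle$ on the contractible interior isolines. Your write-up merely makes explicit the Morse-theoretic foliation of $D\setminus\{p\}$ and the integrability of $\Phi^c/\|\nabla K\|$ near $p$, which the paper leaves to the reader.
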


\begin{corollary} \label{C4.2}
Under hypotheses of Theorem \ref{Th4.2}, let $D$ be an annulus domain on the torus homeomorphic to the product of a segment and circle. Assume $D$ do not contain critical points of the function $K$ and assume both boundary circles to be parts of isolines $\{K=K_0\}$ and $\{K=K_1\}$ respectively. Then
\begin{equation}
\int\limits_D(c^1\Lambda_1+c^2\Lambda_2)\,d\sigma=\pm\langle\sigma,[\gamma]\rangle(K_1-K_0),
                         \label{4.22}
\end{equation}
where $\gamma$ is one of boundary circles of the domain $D$.
\end{corollary}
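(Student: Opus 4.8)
The plan is to integrate identity (\ref{4.11}) over the annulus $D$, slicing $D$ into the isolines of $K$ and invoking Theorem \ref{Th4.2} on each slice. Fix the real pseudovector $c\neq0$ of weight $3$ and the cohomology class $\sigma\in H^1({\mathbb T}^2,{\mathbb R})$ supplied by Theorem \ref{Th4.2}, and write $\Phi^c=c^1\Lambda_1+c^2\Lambda_2$ for the right-hand side of (\ref{4.11}).

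First I would pin down the geometry of $D$. Since $D$ contains no critical point of $K$, the differential $dK$ is nowhere zero on $D$, so $K|_D$ has neither an interior maximum nor an interior minimum; hence $K|_D$ is nonconstant and its image is the interval with endpoints $K_0,K_1$, which I rename so that $K_0<K_1$. For $s\in(K_0,K_1)$ the level set $\{K=s\}$ is a compact embedded $1$-manifold disjoint from $\partial D$, hence a finite disjoint union of circles lying in the interior of $D$; a null-homotopic one of these circles would bound a disk in $D$, and two disjoint ones isotopic to the core of $D$ would cobound a sub-annulus in $D$, and in either case $K$ would be constant on the boundary of that sub-region and therefore attain an interior extremum there, i.e.\ have a critical point in $D$ --- which is excluded. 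Thus $\{K=s\}$ is a single circle, isotopic in $D$ to both boundary circles, and all of these circles represent the same homology class $\pm[\gamma]\in H_1({\mathbb T}^2)$. Equivalently, the flow of a suitable reparameterization of ${\nabla}K$ identifies $D$ with $\{K=K_0\}\times[K_0,K_1]$ so that $K$ becomes the projection to the second factor.

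Next I would integrate. The coarea formula for $K:D\rightarrow{\mathbb R}$ gives
$$
\int\limits_D\Phi^c\,d\sigma=\int\limits_{K_0}^{K_1}\Big(\,\int\limits_{\{K=s\}}\frac{\Phi^c}{\|{\nabla}K\|}\,d\ell\Big)\,ds.
$$
Parameterizing the circle $\{K=s\}$ by an integral curve $\gamma_s$ of $\pm{\nabla}^\bot K$, one has $\|\dot\gamma_s\|=\|{\nabla}^\bot K\|=\|{\nabla}K\|$, so $d\ell=\|{\nabla}K\|\,dt$ and the inner integral equals $\oint_{\gamma_s}(c^1\Lambda_1+c^2\Lambda_2)\,dt$. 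Since $\gamma_s$ is a closed isoline of $K$ free of critical points and is parameterized with $\|\dot\gamma_s\|=\|{\nabla}K\|$, formula (\ref{4.20}) of Theorem \ref{Th4.2} gives $\oint_{\gamma_s}(c^1\Lambda_1+c^2\Lambda_2)\,dt=\langle\sigma,[\gamma_s]\rangle=\langle\sigma,[\gamma]\rangle$, independent of $s$. Integrating over $s\in[K_0,K_1]$ yields $\int_D\Phi^c\,d\sigma=\langle\sigma,[\gamma]\rangle(K_1-K_0)$; the sign in (\ref{4.22}) absorbs the labeling of $K_0,K_1$ and the choice of orientation of $\gamma$.

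A cleaner alternative, which also treats Corollary \ref{C4.1} by the same device and serves as a check, is to observe that (\ref{4.11}) together with $d\sigma=e^{2\mu}\,dx\,dy$ yields the identity of $2$-forms $\Phi^c\,d\sigma=dK\wedge du$ on $D$, where $du=dw+\alpha$ is the globally defined closed $1$-form from (\ref{4.19}). Then $\Phi^c\,d\sigma=d(K\,du)$, so by Stokes' theorem $\int_D\Phi^c\,d\sigma=\int_{\partial D}K\,du$; since $K$ is constant on each boundary circle while the period of $du=dw+\alpha$ over a boundary circle equals that of $\alpha$, namely $\langle\sigma,[\gamma]\rangle$, this again equals $\pm(K_1-K_0)\langle\sigma,[\gamma]\rangle$. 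In either route the only step needing real care is the description of $D$ in the second paragraph --- that $K|_D$ genuinely fibers $D$ over $[K_0,K_1]$ by circles homologous to $\gamma$, with no stray non-closed or contractible components --- after which the argument is just the coarea (or Stokes) identity and the integration of a constant.
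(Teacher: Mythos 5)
Your main argument is exactly the paper's: the paper proves Corollaries \ref{C4.1} and \ref{C4.2} by the single observation that, with the parametrization $\|\dot\gamma\|=\|\nabla K\|$, one has $dt\wedge dK=\pm\,d\sigma$, which is precisely your coarea slicing of $D$ into level circles followed by applying (\ref{4.20}) on each circle and integrating the constant $\langle\sigma,[\gamma]\rangle$ over $s\in[K_0,K_1]$; your careful verification that each level set $\{K=s\}\cap D$ is a single circle homologous to $\gamma$ fills in details the paper leaves implicit, and it is correct. Your alternative Stokes argument is a genuinely different route worth noting: writing $(c^1\Lambda_1+c^2\Lambda_2)\,d\sigma=dK\wedge du=d(K\,du)$ with $du=dw+\alpha$ the closed $1$-form coming from (\ref{4.19}), and evaluating $\int_{\partial D}K\,du$, bypasses the level-set foliation entirely and also yields Corollary \ref{C4.1} without using the nondegeneracy of the interior critical point; the trade-off is that it invokes the function $u$ (i.e.\ the conclusion of Theorem \ref{Th4.1}) rather than only the integral condition (\ref{4.20}) of Theorem \ref{Th4.2}, whereas the paper's (and your primary) argument derives the corollary purely from the statement of Theorem \ref{Th4.2}.
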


To prove Corollaries \ref{C4.1} and \ref{C4.2}, it suffices to observe that, if the parametrization $\gamma(t)$ of an isoline is chosen as indicated in Theorem \ref{Th4.2}, then $dt\wedge dK=\pm d\sigma$.

\bigskip

In conclusion, let us return to (\ref{4.5}) and derive some new fourth order equation for the function $u$. Obviously, $d^2u$ is the Hessian of the function $u$ and $pd^2u$ is the trace-free part of the Hessian. Hence (\ref{4.5}) can be written in the form
\begin{equation}
pd^2u=T^c.
                         \label{4.23}
\end{equation}
The operator  $\delta^2$ is adjoint to $pd^2$. Apply $\delta^2$ to both sides of (\ref{4.23}) to obtain
\begin{equation}
\delta^2pd^2u=\delta^2T^c.
                         \label{4.24}
\end{equation}
Observe the passage from (\ref{4.23}) to (\ref{4.24}) is not reversible. Therefore our further conclusions should be considered as necessary conditions for the solvability of equation (\ref{4.5}).

The fourth order operator $\delta^2pd^2$ is quite similar to the second power of the Laplacian. To clarify the similarity, we perform some calculations in coordinates. First of all,
$$
(pd^2u)_{ij}={\nabla}_{\!i}{\nabla}_{\!j}u-\frac{1}{2}g_{ij}\Delta u.
$$
Hence
$$
\delta^2pd^2u={\nabla}^i\nabla^j\big({\nabla}_{\!i}{\nabla}_{\!j}u-\frac{1}{2}g_{ij}\Delta u\big)
={\nabla}^i\nabla^j{\nabla}_{\!i}{\nabla}_{\!j}u-\frac{1}{2}\Delta^2 u.
$$
Permuting the derivatives $\nabla^j$ and ${\nabla}_{\!i}$ in the first term on the right-hand side with the help of the corresponding commutator formula, we obtain
$$
\delta^2pd^2u=\frac{1}{2}\Delta^2 u-{\nabla}^i\big(R^j_i{\nabla}_{\!j}u),
$$
where $R^j_i$ is the Ricci tensor. In the two-dimensional case, $R^j_i=-K\delta^j_i$, where $K$ is the Gaussian curvature, and the last formula takes the form
$$
\delta^2pd^2u=\frac{1}{2}\Delta^2 u+{\nabla}^i\big(K{\nabla}_{\!i}u).
$$
Equation (\ref{4.24}) is thus equivalent to the following one:
\begin{equation}
\frac{1}{2}\Delta^2 u+\delta(Kdu)=\delta^2T^c.
                         \label{4.25}
\end{equation}

Now, we evaluate the right-hand side of (\ref{4.25}). We have already calculated the divergence of an arbitrary trace-free tensor field, formula (\ref{3.7'}). Applying this formula to $T^c$ and using (\ref{4.0}), we find in global isothermal coordinates
$$
\begin{aligned}
(\delta T^c)_{1}&=e^{2\mu}(-c^2\mu_{xx}+2c^1\mu_{xy}+c^2\mu_{yy}-4c^2\mu_{x}^2+8c^1\mu_{x}\mu_{y}+4c^2\mu_{y}^2),\\
(\delta T^c)_{2}&=e^{2\mu}(c^1\mu_{xx}+2c^2\mu_{xy}-c^1\mu_{yy}+4c^1\mu_{x}^2+8c^2\mu_{x}\mu_{y}-4c^1\mu_{y}^2).
\end{aligned}
$$
Substituting these values into the formula
$$
\delta^2T^c=e^{-2\mu}\Big(\frac{\partial(\delta T^c)_1}{\partial x}+\frac{\partial(\delta T^c)_2}{\partial y}\Big),
$$
we obtain
$$
\delta^2T^c=-c^2\Lambda_1+c^1\Lambda_2,
$$
where $\Lambda=(\Lambda_1,\Lambda_2)$ is defined by (\ref{4.14}). Thus, equation (\ref{4.25}) takes the form
$$
\frac{1}{2}\Delta^2 u+\delta(Kdu)=-c^2\Lambda_1+c^1\Lambda_2,
$$
or in more traditional notations,
\begin{equation}
\frac{1}{2}\Delta^2 u+\mbox{\rm div}(K{\nabla}u)=-c^2\Lambda_1+c^1\Lambda_2.
                         \label{4.26}
\end{equation}
Uniting equations (\ref{4.11}) and (\ref{4.26}), we arrive to the following statement.

\begin{theorem} \label{Th4.3}
If a Riemannian torus $({\mathbb R}^2/\Gamma,g)$ admits a rank 3 irreducible Killing tensor field, then there exists a constant real pseudovector $0\neq c=(c^1,c^2)$ such that the system of equations
$$
({\nabla}^\bot K)u=c^1\Lambda_1+c^2\Lambda_2,\quad \frac{1}{2}\Delta^2 u+\mbox{\rm div}(K{\nabla}u)=-c^2\Lambda_1+c^1\Lambda_2
$$
has a solution
$u\in C^\infty({\mathbb R}^2)$ with $\Gamma$-periodic derivatives $u_x$ and $u_y$. Here $K$ is the Gaussian curvature and $\Lambda=(\Lambda_1,\Lambda_2)$ is the 1-pseudoform of weight 3 which is defined by (\ref{4.16})--(\ref{4.17}) in global isothermal coordinates.
\end{theorem}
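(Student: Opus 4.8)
The plan is to assemble the two necessary conditions that the preceding computations have already produced. I would start from a real irreducible rank $3$ Killing tensor field $f$ and pass to its harmonic decomposition $f = if^0 + f^1$, with $f^0 \in C^\infty(S^1)$ and $f^1 \in C^\infty(\mathrm{Ker}^3 j)$, so that by \eqref{2.3'} the Killing equation becomes the chain \eqref{4.1}. Irreducibility forces the higher harmonic $f^1 = pf$ to be nonzero, and Theorem \ref{Th3.1} then determines $f^1$ in terms of a nonzero real constant pseudovector $c$ of weight $3$; since $\delta f^1 = 2Z^{2,c}$, the chain \eqref{4.1} collapses to the pair \eqref{4.2}. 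Solving $\delta f^0 = 0$ on the torus by putting $f^0_1 = -u_y$, $f^0_2 = u_x$ for a function $u \in C^\infty(\mathbb{R}^2)$ with $\Gamma$-periodic first derivatives (which is exactly what makes the Hessian $\nabla\nabla u$, the Laplacian $\Delta u$, and all the tensorial identities below well defined on the torus), the surviving equation $pdf^0 = -Z^{2,c}$ turns into the second-order system \eqref{4.4}, i.e. into the invariant equation $pd^2u = T^c$ of \eqref{4.5} and \eqref{4.23}. At this stage $c \neq 0$ and $u$ are precisely the objects whose existence the theorem asserts, and only the two displayed equations remain to be checked.

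The first displayed equation is the transport relation \eqref{4.11}, which I would obtain by prolonging \eqref{4.5}: one differentiation gives the closed formula \eqref{4.7} for the third covariant derivatives of $u$; differentiating once more, alternating in the leading index pair, and then substituting \eqref{4.7} and \eqref{4.5} and using the commutator identities \eqref{4.9} together with the two-dimensional curvature identity $R_{ijk\ell}=K(g_{ik}g_{j\ell}-g_{i\ell}g_{jk})$, one finds that the curvature terms cancel and is left with $(\nabla^\bot K)u = c^1\Lambda_1 + c^2\Lambda_2$. The second displayed equation is the fourth-order relation \eqref{4.26}, obtained by applying $\delta^2$ to $pd^2u = T^c$: commuting $\nabla^j$ past $\nabla_i$ and using the two-dimensional Ricci identity $R^j_i = -K\delta^j_i$ turns the left side into $\tfrac12\Delta^2 u + \mathrm{div}(K\nabla u)$, while applying the trace-free divergence formula \eqref{3.7'} twice to $T^c$ turns the right side into $-c^2\Lambda_1 + c^1\Lambda_2$. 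Placing the two equations side by side yields exactly the system in the statement for the pair $(c,u)$.

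Since \eqref{4.11} and \eqref{4.26} have both been established in the text leading up to the theorem, the proof of Theorem \ref{Th4.3} itself is a one-line synthesis, and the substantive difficulty lies upstream. The step I expect to be the crux is the vanishing of the right-hand side of \eqref{4.8}: one must see that, once \eqref{4.5} is inserted, the fourth-derivative commutator $\nabla_i\nabla_j\nabla_k\nabla_\ell u - \nabla_j\nabla_i\nabla_k\nabla_\ell u$ reduces --- via the antisymmetry of $R$ in its first two indices, which kills the two $\tfrac12(\Delta u)R$ terms --- exactly to the tensorial commutator $\nabla_i\nabla_j T^c_{k\ell} - \nabla_j\nabla_i T^c_{k\ell}$ of \eqref{4.9}, since this cancellation is what demotes \eqref{4.8} from a relation still involving second derivatives of $u$ to the genuine first-order transport equation \eqref{4.11}. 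I would also record, as the author does, that the passage from \eqref{4.23} to \eqref{4.24} is not reversible, so Theorem \ref{Th4.3} is only a necessary condition for the solvability of \eqref{4.5}.
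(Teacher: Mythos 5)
Your proposal is correct and follows essentially the same route as the paper: harmonic decomposition and Theorem \ref{Th3.1} to reduce the Killing equation to system (\ref{4.2}), the substitution $f^0_1=-u_y$, $f^0_2=u_x$ leading to $pd^2u=T^c$, then the prolongation argument giving (\ref{4.11}) and the application of $\delta^2$ giving (\ref{4.26}), after which the theorem is the stated synthesis. Your identification of the cancellation on the right-hand side of (\ref{4.8}) as the key step matches the paper's computation exactly.
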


\begin{corollary} \label{C4.3}
If a Riemannian torus $({\mathbb T}^2,g)$ admits a rank 3 irreducible Killing tensor field, then both the components of the 1-pseudoform $\Lambda=(\Lambda_1,\Lambda_2)$ have zero mean values, i.e.,
\begin{equation}
\int\limits_{{\mathbb T}^2}\Lambda_1\,d\sigma=0,\quad \int\limits_{{\mathbb T}^2}\Lambda_2\,d\sigma=0,
                         \label{4.27}
\end{equation}
where $d\sigma=e^{2\mu}\,dxdy$ is the area form.
\end{corollary}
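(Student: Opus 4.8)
\emph{Proof plan.} The plan is to integrate over the torus the two equations produced by Theorem \ref{Th4.3} and to read off (\ref{4.27}) from the resulting linear relations. By Theorem \ref{Th4.3} there are a nonzero real constant pseudovector $c=(c^1,c^2)$ of weight $3$ and a real function $u\in C^\infty({\mathbb R}^2)$ with $\Gamma$-periodic derivatives $u_x,u_y$ such that
$$
({\nabla}^\bot K)u=c^1\Lambda_1+c^2\Lambda_2,\qquad \frac12\Delta^2u+\mbox{\rm div}(K{\nabla}u)=-c^2\Lambda_1+c^1\Lambda_2
$$
in global isothermal coordinates. Put $A=\int_{{\mathbb T}^2}\Lambda_1\,d\sigma$ and $B=\int_{{\mathbb T}^2}\Lambda_2\,d\sigma$. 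I claim that the left-hand side of each of the two equations integrates to zero over ${\mathbb T}^2$. Granting this, we get $c^1A+c^2B=0$ and $-c^2A+c^1B=0$; since $c$ is real and nonzero, the determinant $(c^1)^2+(c^2)^2$ of this $2\times2$ system is positive, so $A=B=0$, which is (\ref{4.27}).

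For the second equation the vanishing is immediate. The function $\Delta u$ and the vector field $K{\nabla}u$ are honestly defined on ${\mathbb T}^2$, since $u$ enters their coordinate expressions only through $u_x,u_y$ and their derivatives together with the $\Gamma$-periodic quantities $\mu$ and $K$. Hence $\Delta^2u=\Delta(\Delta u)$ is the Laplacian of a smooth function on a closed surface, so $\int_{{\mathbb T}^2}\Delta^2u\,d\sigma=0$, and $\int_{{\mathbb T}^2}\mbox{\rm div}(K{\nabla}u)\,d\sigma=0$ by the divergence theorem. Concretely, on a fundamental domain $\Delta^2u\,d\sigma=\big(\partial_x^2+\partial_y^2\big)(\Delta u)\,dxdy$ and $\mbox{\rm div}(K{\nabla}u)\,d\sigma=\big(\partial_x(Ku_x)+\partial_y(Ku_y)\big)\,dxdy$, and both integrate to zero because the bracketed functions are $\Gamma$-periodic. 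This yields $-c^2A+c^1B=0$.

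The first equation is the point that needs a little care: $u$ itself is \emph{not} $\Gamma$-periodic, so $u\,{\nabla}^\bot K$ is not a vector field on ${\mathbb T}^2$ and one cannot directly realize $({\nabla}^\bot K)u$ as a divergence on the torus. Instead I would compute in global isothermal coordinates, where ${\nabla}^\bot K=e^{-2\mu}\big(-K_y\,\frac{\partial}{\partial x}+K_x\,\frac{\partial}{\partial y}\big)$ and $d\sigma=e^{2\mu}\,dxdy$; this gives
$$
({\nabla}^\bot K)u\,d\sigma=(K_xu_y-K_yu_x)\,dxdy=\big(\partial_x(Ku_y)-\partial_y(Ku_x)\big)\,dxdy,
$$
and now $Ku_x$ and $Ku_y$ \emph{are} $\Gamma$-periodic, so the integral over ${\mathbb T}^2$ vanishes. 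Thus $c^1A+c^2B=0$; combined with the relation from the second equation this forces $A=B=0$, completing the proof. (The simultaneous vanishing of $A$ and $B$ does not depend on the chosen global isothermal coordinate system, since $\Lambda$ is a $1$-pseudoform of weight $3$, so (\ref{4.27}) is unambiguous.) The only genuine obstacle is the non-periodicity of $u$ in the first equation, handled by the substitution above.
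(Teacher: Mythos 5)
Your proof is correct, and it splits into one half that coincides with the paper and one half that is genuinely different. The relation $-c^2\!\int\Lambda_1\,d\sigma+c^1\!\int\Lambda_2\,d\sigma=0$ you get exactly as the paper does, by integrating the fourth-order equation (\ref{4.26}) over the torus (you in fact supply the periodicity justifications for $\Delta^2u$ and $\mbox{\rm div}(K\nabla u)$ that the paper leaves implicit). The second relation $c^1\!\int\Lambda_1\,d\sigma+c^2\!\int\Lambda_2\,d\sigma=0$ is where you diverge: you integrate the first equation of Theorem \ref{Th4.3}, i.e.\ (\ref{4.11}), and neutralize the non-periodicity of $u$ via the identity $({\nabla}^\bot K)u\,d\sigma=\big(\partial_x(Ku_y)-\partial_y(Ku_x)\big)\,dxdy$, which is a clean and valid trick. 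The paper instead never touches $u$ or $K$ for this relation: it uses (\ref{4.12})--(\ref{4.13}) to write $c^1\Lambda_1+c^2\Lambda_2=-\delta v$ with $v=(\delta T^c)^\bot$ a globally defined covector field on the torus, so the integral vanishes by the divergence theorem. The practical difference is that the paper's identity holds for an arbitrary constant pseudovector $c$ and does not use the Killing-field hypothesis at all, whereas your version of this relation is tied to the particular $c$ and $u$ furnished by Theorem \ref{Th4.3}; for the purpose of Corollary \ref{C4.3} both suffice, and your final linear-algebra step (determinant $(c^1)^2+(c^2)^2>0$ for real $c\neq0$) is exactly the paper's concluding remark made explicit.
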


\begin{proof}
The left-hand side of (\ref{4.26}) integrates to zero over the torus. Hence
\begin{equation}
-c^2\int\limits_{{\mathbb T}^2}\Lambda_1\,d\sigma+c^1\int\limits_{{\mathbb T}^2}\Lambda_2\,d\sigma=0.
                         \label{4.28}
\end{equation}

By (\ref{4.12})--(\ref{4.13}), in global isothermal coordinates,
$$
c^1\Lambda_1+c^2\Lambda_2=e^{-2\mu}\big({\nabla}_{\!1}(\delta T^c)_2-{\nabla}_{\!2}(\delta T^c)_1\big)={\nabla}^1(\delta T^c)_2-{\nabla}^2(\delta T^c)_1.
$$
Introduce the covector field $v=(\delta T^c)^\bot=-(\delta T^c)_2\,dx+(\delta T^c)_1\,dy$. The previous formula can be rewritten in terms of $v$ as follows:
$$
c^1\Lambda_1+c^2\Lambda_2=-({\nabla}^1v_1+{\nabla}^2v_2)=-\delta v.
$$
Hence
\begin{equation}
c^1\int\limits_{{\mathbb T}^2}\Lambda_1\,d\sigma+c^2\int\limits_{{\mathbb T}^2}\Lambda_2\,d\sigma=0.
                         \label{4.29}
\end{equation}
Equalities (\ref{4.28}) and (\ref{4.29}) imply (\ref{4.27}) since $c\neq0$.
\end{proof}


\end{document}